\documentclass[12pt]{article}
\usepackage{amsmath,amsfonts}
\title{Commensurators and Quasi-Normal Subgroups }
\author{G. Conner and M. Mihalik}
\newtheorem{theorem}{Theorem}
\newtheorem{proposition}[theorem]{Proposition}
\newtheorem{lemma}[theorem]{Lemma}
\newtheorem{corollary}[theorem]{Corollary}

\newcounter{remarknum}
\newenvironment{remark}{\addvspace{12pt}\refstepcounter{remarknum}
\noindent{\bf Remark \arabic{remarknum}.}}{\par\addvspace{12pt}}
\newenvironment{proof}{\addvspace{12pt}\noindent{\bf Proof:}}{
$\Box$\par\addvspace{12pt}}
\newcounter{examplenum}
\newenvironment{example}{\addvspace{12pt}\refstepcounter{examplenum}
\noindent{\bf Example \arabic{examplenum}.}}{\par\addvspace{12pt}}
\newcounter{factnum}

\date{December 9, 2009}
\begin{document}
\maketitle
\begin{abstract}
If $G$ is a group, then subgroups $A$ and $B$ are {\it commensurable} if $A\cap B$ has finite index in both $A$ and $B$. The {\it commensurator} of $A$ in $G$, denoted $Comm(A,G)$, is 
$$\{g\in G | (gAg^{-1})\cap A\ \hbox {has finite index in both}\  A\ \hbox {and} \ gAg^{-1}\}.$$
It is straightforward to check that $Comm(A,G)$ is a subgroup of $G$. We say $A$ is {\it quasi-normal} in $G$ if $Comm(A,G)=G$. Denote the centralizer of $A$ in $G$ as $C(A,G)$ and the normalizer of $A$ in $G$ as $N(A,G)$ then $C(A,G)<N(A,G)<Comm(A,G)$. We develop geometric versions of commensurators in finitely generated groups. In particular, $g\in Comm(A,G)$ iff the Hausdorff distance between $A$ and $gA$ is finite. We show a quasi-normal subgroup of a group is the kernel of a certain map, and a subgroup of a finitely generated group is quasi-normal iff the natural coset graph is locally finite. This last equivalence is particularly useful for deriving asymtopitic results for finitely generated groups. Our primary goal in this paper is to develop the basic theory of quasi-normal subgroups, comparing analogous results for normal subgroups and isolating differences between quasi-normal and normal subgroups. 
\end{abstract}

\section{Introduction} 
Classically (1966), A. Borel proved several results about irreducible lattices in semisimple Lie groups that cemented commensurators as critical to the theory \cite{B}. In 1975, G. A. Margulis extended these results \cite{Ma}. 







A subgroup $Q$ of a group $G$ is {\it quasi-normal} in $G$ if $G$ is the commensurator of $H$ in $G$. Our goal in this paper is to develop the theory of quasi-normal subgroups of groups in analogy with the theory of normal subgroups of groups. There are significant parallels  between the two theories and subtle differences. We include a focus on finitely generated groups as a means to examine the geometric group theory of quasi-normal subgroups and to uncover basic geometric intuition in the subject.
 
In section 2, we derive several rather technical results. While most of these results might be skipped on a first reading, corollaries \ref{C8} and \ref{C9} give  geometric interpretations of commensurators in a finitely generated group and provide geometric motivation for what follows. In particular, we show that a subgroup $Q$ of a finitely generated group $G$ is quasi-normal iff the Hausdorff distance between $Q$ and $gQ$ is finite for every $g\in G$.  

Section 3 contains the bulk of the basic theory of quasi-normal subgroups. We show the intersection of two quasi-normal subgroups is quasi-normal, but the intersection of a countable number of quasi-normal subgroups may not be quasi-normal. The union of two quasi-normal subgroups may not generate a quasi-normal subgroup, but the union of a quasi-normal subgroup and a normal subgroup generate a quasi-normal subgroup. The ascending union of quasi-normal subgroups may not be quasi-normal.  The image and inverse image of a quasi-normal subgroup under an epimorphism is quasi-normal. We examine quasi-normal subgroups in amalgamated products and HNN extensions of groups and show that quasi-normal subgroups of a word hyperbolic group behave like normal subgroups with respect to limit sets and quasi-convexity. 

In section 4,  we produce two characterizations of quasi-normal subgroups of finitely generated groups. First we show that a subgroup of a finitely generated group $G$  is quasi-normal iff it is the kernel of a certain map of $G$ to a set. For a second characterization of quasi-normality, we consider subgroups $H$ of a finitely generated group $G$ and construct a left $H$-coset graph that is locally finite iff $H$ is quasi-normal in $G$. 
We show this graph has either 0, 1, 2 or an uncountable number of ends, in direct analogy with H. Hopf's theorem that a finitely generated group has   0, 1, 2 or an uncountable number of ends. We give an example of a finitely generated group $G$ and quasi-normal subgroup $Q$ where the number of ends of the quotient $Q\backslash\Gamma$ is countably infinite (for $\Gamma$ a Cayley graph of $G$) but the left coset graph has an uncountable number of ends. When $Q$ is a finitely generated quasi-normal subgroup of a finitely generated group $G$ then the number of ends of the coset graph agrees with the number of filtered ends of the pair $(G,Q)$ (see Chapter 14 of \cite {Ge} for a study of filtered ends of a pair of groups). Finally, we point out the connection of this second characterization to the bounded packing ideas of Hruska and Wise \cite{HW}.

In Section 5, we list some of the asymptotic results that will appear in a separate paper. These are semistability and simple connectivity at infinity results that generalize fundamental results in the subject. An analysis of Higman's simple group also appears in this section as well as a series of natural questions associated to this group. 

\section{Basic Technical Results}
 If $H$ is a subgroup of $G$ and $g\in G$, then $g$ is an element of the {\it commensurator} of $H$ in $G$ if $gHg^{-1}\cap H$ has finite index in both $gHg^{-1}$ and $H$. 
For $H$ a subgroup of $G$ we write $Comm(H,G)$ for the commensurator of $H$ in $G$ when the over group $G$ may not be apparent.

\begin{theorem}\label{T1}
If $H$ is a subgroup of $G$ and $g\in G$ then $g\in Comm(H,G)$ iff there are finite subsets $A$ and $B$ of $G$ such that for each $h\in H$ there is an $a\in A$ and  $b\in B$ such that $ha\in gH$, and $ghb\in H$. (Equivalently, there is $a\in A$ such that  $h(ag^{-1})\in gHg^{-1}$ and $b\in B$ such that $ghg^{-1}(gb)\in H$.) Corollary \ref{C8} is a geometric version of this theorem when $G$ is finitely generated.
\end{theorem}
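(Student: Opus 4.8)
The plan is to reduce the two finiteness-of-index conditions defining the commensurator to two statements about how translates of $H$ meet the right cosets of $H$, and then to recognize each of these as exactly one of the two finite-set conditions in the statement. Write $K=gHg^{-1}\cap H$. By definition $g\in Comm(H,G)$ precisely when $[H:K]<\infty$ and $[gHg^{-1}:K]<\infty$, so it is enough to match the first index with the condition involving $A$ and the second with the condition involving $B$.

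For the condition on $A$, I would first record the elementary rewriting $ha\in gH\iff g^{-1}h\in Ha^{-1}$. Consequently the requirement that for each $h\in H$ there exist $a\in A$ with $ha\in gH$ says exactly that $g^{-1}H$ is covered by the finitely many right cosets $\{Ha^{-1}:a\in A\}$, i.e.\ that $g^{-1}H$ meets only finitely many right cosets of $H$; conversely, choosing one representative from each such coset produces a finite set $A$. The key computational step is then a double-coset count: for $h,h'\in H$ one has $Hg^{-1}h=Hg^{-1}h'$ iff $g^{-1}h(h')^{-1}g\in H$ iff $h(h')^{-1}\in K$, so the right cosets of $H$ meeting $g^{-1}H$ are in bijection with $K\backslash H$. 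Hence the condition on $A$ holds iff $[H:K]<\infty$. (Equivalently, this is the orbit--stabilizer statement that the $H$-orbit of $gH$ in $G/H$ has stabilizer $K$.)

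For the condition on $B$ I would argue symmetrically using the equivalent form $ghb\in H\iff ghg^{-1}\in H(gb)^{-1}$. This says that $gHg^{-1}$ is covered by the finitely many right cosets $\{H(gb)^{-1}:b\in B\}$, that is, $gHg^{-1}$ meets only finitely many right cosets of $H$. The analogous count, that $u,v\in gHg^{-1}$ satisfy $Hu=Hv$ iff $uv^{-1}\in K$, puts these cosets in bijection with $K\backslash gHg^{-1}$, so the condition on $B$ holds iff $[gHg^{-1}:K]<\infty$.

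Assembling the two equivalences gives the theorem: $g\in Comm(H,G)$ iff both indices are finite iff both the $A$- and the $B$-conditions hold, with the finite sets either read off directly (the converse) or obtained by picking coset representatives (the forward direction). I expect the only real difficulty to be bookkeeping---keeping left versus right cosets and the direction of conjugation consistent, and checking that the pointwise ``for each $h$ there is $a\in A$'' phrasing is genuinely the same as a finite covering of $g^{-1}H$ by right cosets. The mathematical heart is just the standard index count identifying $[H:K]$ and $[gHg^{-1}:K]$ with the numbers of right cosets met by $g^{-1}H$ and $gHg^{-1}$ respectively, which is routine once the rewritings are in place.
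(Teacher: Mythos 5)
Your proof is correct: both rewritings ($ha\in gH\iff g^{-1}h\in Ha^{-1}$ and $ghb\in H\iff ghg^{-1}\in H(gb)^{-1}$) and both coset counts check out, and together they give the theorem. The underlying computation is the same one the paper uses --- two elements determine the same right coset of $H$ iff they differ by an element of $K=gHg^{-1}\cap H$ --- but your organization is genuinely different. The paper argues forward/backward: forward, it picks right-coset representatives of $K$ in $H$ to build $A=\{h_1^{-1}g,\ldots,h_n^{-1}g\}$ and then obtains $B$ by invoking that $Comm(H,G)$ is a subgroup (so that $g^{-1}\in Comm(H,G)$) and repeating the argument; backward, it introduces a choice function $\alpha:H\to A$, shows the fibers of $\alpha$ lie in right $K$-cosets to get $[H:K]<\infty$, and reaches the second index by running the same argument for $g^{-1}$ and then conjugating. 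You instead prove two self-contained equivalences --- the $A$-condition iff $[H:K]<\infty$, and the $B$-condition iff $[gHg^{-1}:K]<\infty$ --- each via an explicit bijection ($K\backslash H$ with the right cosets of $H$ meeting $g^{-1}H$, and $K\backslash gHg^{-1}$ with those meeting $gHg^{-1}$). Your route is more symmetric, since the second index is computed directly with no detour through $g^{-1}$ and conjugation-invariance of index, and, notably, it nowhere uses that $Comm(H,G)$ is a subgroup --- a fact the paper presents afterward as corollary \ref{subgroup}, as a consequence of this very theorem, so its appearance inside the paper's proof looks circular (harmless, since $g\in Comm(H,G)\Rightarrow g^{-1}\in Comm(H,G)$ follows directly from conjugation-invariance of index, but your argument needs no such repair). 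What the paper's proof buys in exchange is the explicit data --- the finite set and the choice functions $\alpha_g,\beta_g$ --- which is immediately packaged as $A(g)$ in corollary \ref{C6} and reused throughout the later sections; your bijections contain the same information, but it would have to be extracted.
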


\begin{proof}
Suppose $g\in Comm(H,G)$. Choose $h_i\in H$ such that $$\cup_{i=1}^n(gHg^{-1}\cap H)h_i=H$$ For $h\in H$, say $h=xh_i$ for some $i\in\{1,\ldots ,n\}$ and some $x\in (gHg^{-1}\cap H)$. Then $x=hh_i^{-1}  \in (gHg^{-1}\cap H)$ and $hh_i^{-1}g\in gH$. So, we can let $A$ be the finite set $\{h_1^{-1}g,\ldots , h_n^{-1}g\}$. Since $Comm(H,G)$ is a subgroup of $G$, $g^{-1}\in Comm(H,G)$. By the preceding argument, there is a finite subset $B$ of $G$ such that for each $h\in H$ there is a $b\in B$ such that $hb\in g^{-1}H$. Equivalently, $ghb\in H$. 
 
 Assume $A$ and $B$ are finite subsets of $G$ satisfying the conclusion of the theorem. Define a function $\alpha:H\to A$ such that $h\alpha (h)\in gH$. Suppose $h_1,h_2\in H$ and $\alpha (h_1)=\alpha (h_2)$.  As $h_1\alpha (h_1)$ and $h_2\alpha (h_2)$ are elements of $gH$, we have $h_1\alpha (h_1)Hg^{-1}=h_2\alpha (h_2)Hg^{-1}=gHg^{-1}$. Then 
 $$ h_2h_1^{-1}gHg^{-1}=h_2h_1^{-1}h_1\alpha (h_1)Hg^{-1}=h_2\alpha(h_2)Hg^{-1}=gHg^{-1}$$
 In particular: 
 $$\hbox{If}\  \alpha(h_1)=\alpha(h_2) \hbox{ then } h_2h_1^{-1}\in gHg^{-1}\cap H.$$
Say $im(\alpha)=\{a_1,\ldots ,a_n\}$ and select $h_i\in H$ such that $\alpha(h_i)=a_i$. If $h\in H$ and $\alpha(h)=a_i$, then $hh_i^{-1}\in H\cap gHg^{-1}$ and $h\in (H\cap gHg^{-1})h_i$. We have $H=\cup _{i=1}^n(H\cap gHg^{-1})h_i$ and $H\cap gHg^{-1}$ has finite index in $H$. 

For each $h\in H$ there is $b\in B$ such that $ghb\in H$, and so $hb\in g^{-1}H$. The preceding argument implies $H\cap g^{-1}Hg$ has finite index in $H$. Conjugating (by $g^{-1}$) gives $gHg^{-1}\cap H$ has finite index in $gHg^{-1}$. 
\end{proof}

It is now straightforward to see:

\begin{corollary}\label{subgroup}
For any group $G$ and  subgroup $H$ of $G$, $Comm(H,G)$ is a subgroup of $G$.
\end{corollary}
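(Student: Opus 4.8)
The plan is to work entirely through the finite-set characterization of Theorem \ref{T1}, verifying the three subgroup axioms directly: that the identity lies in $Comm(H,G)$, that the set is closed under inverses, and that it is closed under products. Throughout I will say that $g$ is \emph{witnessed by} $(A,B)$ when $A,B$ are finite subsets of $G$ such that for every $h\in H$ there exist $a\in A$ with $ha\in gH$ and $b\in B$ with $ghb\in H$; by Theorem \ref{T1} this is equivalent to $g\in Comm(H,G)$. For the identity, take $A=B=\{e\}$: for every $h\in H$ we have $he=h\in eH$ and $ehe=h\in H$, so $e\in Comm(H,G)$.

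Closure under inverses should exploit the built-in symmetry of the two conditions, so that no fresh computation is needed. If $g$ is witnessed by $(A,B)$, then for $h\in H$ the backward condition $ghb\in H$ rewrites as $hb\in g^{-1}H$, which is exactly the forward condition for $g^{-1}$ using the set $B$; and the forward condition $ha\in gH$ rewrites as $g^{-1}ha\in H$, which is precisely the backward condition for $g^{-1}$ using the set $A$. Hence $g^{-1}$ is witnessed by $(B,A)$, so $g^{-1}\in Comm(H,G)$.

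The product step is the one requiring care, and is the main (if modest) obstacle, since I must compose the two pairs of conditions in the correct order and confirm that the resulting witness sets remain finite. Suppose $g_1$ is witnessed by $(A_1,B_1)$ and $g_2$ by $(A_2,B_2)$. For the forward condition of $g_1g_2$, given $h\in H$ first choose $a_1\in A_1$ with $ha_1\in g_1H$, write $ha_1=g_1k$ with $k\in H$, then choose $a_2\in A_2$ with $ka_2\in g_2H$; combining gives $ha_1a_2\in g_1g_2H$, so the finite set $A_1A_2$ suffices. For the backward condition, given $h\in H$ first choose $b_2\in B_2$ with $g_2hb_2\in H$, and then, applying $g_1$'s backward condition to this element of $H$, obtain $b_1\in B_1$ with $g_1g_2hb_2b_1\in H$; thus the finite set $B_2B_1$ works, the reversed order being the point to watch.

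Consequently $g_1g_2$ is witnessed by $(A_1A_2,\,B_2B_1)$, and Theorem \ref{T1} yields $g_1g_2\in Comm(H,G)$. Together with the identity and inverse cases, this shows $Comm(H,G)$ is closed under the group operations, hence is a subgroup of $G$. The only delicate points are bookkeeping ones—tracking which coset a rewritten element lands in and respecting the noncommutativity of the ambient group in the product case—rather than any substantive difficulty, which is why the corollary is indeed immediate once Theorem \ref{T1} is in hand.
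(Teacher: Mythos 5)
Your proof is correct and takes essentially the approach the paper intends: the paper gives no explicit argument, stating the corollary as immediate from Theorem \ref{T1}, and your witness bookkeeping --- $(B,A)$ for $g^{-1}$, $(A_1A_2,\,B_2B_1)$ for $g_1g_2$ --- is exactly that verification (your inverse step is also precisely part 1 of Corollary \ref{C7}). One caveat worth recording, though it reflects the paper's bootstrapping rather than a flaw in your reasoning: the paper's proof of Theorem \ref{T1} itself invokes ``$Comm(H,G)$ is a subgroup'' to produce the set $B$, so a fully non-circular account should note that inverse-closure holds directly from the definition, since conjugation by $g^{-1}$ carries $gHg^{-1}\cap H$ to $H\cap g^{-1}Hg$ preserving the relevant indices.
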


\begin{corollary}\label{C6}
Suppose $H$ is a subgroup of $G$ and $g\in Comm(H,G)$. There is a finite subset $A(g,H,G)$ of $HgH\subset Comm(H,G)$ and functions $\alpha_{(g,H,G)}$ and $\beta_{(g,H,G)}$ (written $A(g)$, $\alpha_g$, and $\beta_g$ respectively, when $H$ and $G$ are unambiguous) such that:

1) $\alpha_g:H\to A(g)$ and $\beta_g:H\to (A(g))^{-1}$,

2) for each $h\in H$, $h\alpha_g(h)\in gH$ and $gh\beta_g(h)\in H$,

3) $[image(\alpha_g)\cup (image (\beta_g))^{-1}]=A(g)$, and

4) for each $a\in A(g)$, there is $h_1\in H$ such that $h_1a\in gH$, and $h_2\in H$ such that $gh_2a^{-1}\in H$. (It is not possible to make $A(g)$ symmetric.)
\end{corollary}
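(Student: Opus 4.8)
The plan is to read the functions and the set directly off the construction already performed in the proof of Theorem \ref{T1}, and then to notice that condition (4) is secretly just a statement about membership in the double coset $HgH$, so it costs almost nothing once $A(g)$ is placed inside $HgH$.

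First I would apply the forward direction of Theorem \ref{T1} to $g$ exactly as in its proof: choosing coset representatives $h_1,\ldots,h_n$ for $gHg^{-1}\cap H$ in $H$ produces the finite set $\{h_1^{-1}g,\ldots,h_n^{-1}g\}\subset Hg$ with the property that each $h\in H$ satisfies $h(h_i^{-1}g)\in gH$, where $i$ is the index with $h\in(gHg^{-1}\cap H)h_i$. This lets me define $\alpha_g\colon H\to Hg$ by $\alpha_g(h)=h_i^{-1}g$, a function with finite image satisfying $h\alpha_g(h)\in gH$. Since $Comm(H,G)$ is a subgroup by Corollary \ref{subgroup}, we have $g^{-1}\in Comm(H,G)$, and running the identical argument on $g^{-1}$ yields a finite-image function $\beta_g\colon H\to Hg^{-1}$ with $h\beta_g(h)\in g^{-1}H$, equivalently $gh\beta_g(h)\in H$. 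This already gives both containments of condition (2).

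Next I would simply set $A(g)=image(\alpha_g)\cup(image(\beta_g))^{-1}$, which is finite. Conditions (1) and (3) then hold by construction, since $image(\alpha_g)\subset A(g)$ and $image(\beta_g)\subset(A(g))^{-1}$. The containment $A(g)\subset HgH$ is immediate from the construction, because $image(\alpha_g)\subset Hg\subset HgH$ and $(image(\beta_g))^{-1}\subset(Hg^{-1})^{-1}=gH\subset HgH$. That $HgH\subset Comm(H,G)$ then follows from $H\cup\{g\}\subset Comm(H,G)$ together with Corollary \ref{subgroup}.

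The only substantive point is condition (4), and here the key observation is that it is equivalent to the single fact $A(g)\subset HgH$. Indeed, for any $a\in G$ there is $h_1\in H$ with $h_1a\in gH$ if and only if $a\in HgH$ (writing $a=h'gh''$ one takes $h_1=(h')^{-1}$; conversely $h_1a\in gH$ forces $a\in HgH$), and symmetrically there is $h_2\in H$ with $gh_2a^{-1}\in H$ if and only if $a\in HgH$ (with $h_2=h''$). Since $A(g)$ has already been placed inside $HgH$, condition (4) follows at once. I expect the main thing to watch — rather than a genuine obstacle — is the bookkeeping that keeps $\alpha_g$ and $\beta_g$ honest as functions, by fixing one consistent choice of coset representative for each $h$, together with the recognition that the double-coset reformulation decouples condition (4) entirely from the explicit construction; the parenthetical assertion that $A(g)$ cannot in general be made symmetric is not needed for the proof itself.
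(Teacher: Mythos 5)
Your proposal is correct, and its core construction coincides with the paper's: both take the finite set $A\subset Hg$ produced in the proof of Theorem \ref{T1}, obtain $B\subset Hg^{-1}$ by running the same argument on $g^{-1}\in Comm(H,G)$ (legitimate by Corollary \ref{subgroup}), arrange the choice functions to be onto their images, and set $A(g)=image(\alpha_g)\cup (image(\beta_g))^{-1}$, so that conditions (1)--(3) hold by fiat. Where you genuinely differ is condition (4). The paper verifies it by an element chase driven by surjectivity: for $a\in image(\alpha_g)$ it picks $h_1\in \alpha_g^{-1}(a)$, so $h_1a\in gH$, and writing $h_1a=gh_2$ yields $gh_2a^{-1}=h_1\in H$; the case $a\in (image(\beta_g))^{-1}$ is handled symmetrically. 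You instead isolate the clean equivalence that, for an arbitrary $a\in G$, each of the two existence statements in (4) holds iff $a\in HgH$, and then observe $A(g)\subset Hg\cup gH\subset HgH$. This buys two things the paper's argument does not make visible: it explicitly verifies the containment $A(g)\subset HgH\subset Comm(H,G)$ asserted in the statement (the paper's proof leaves this implicit, recoverable only by running your equivalence backwards through its verification of (4)), and it shows that (4) is automatic for \emph{any} finite subset of the double coset $HgH$, decoupling that condition from the particular construction of $A(g)$. Both verifications are correct; yours is marginally more general and arguably cleaner.
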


\begin{proof}
Consider the sets $A$ and $B$ of theorem \ref{T1}. There are functions $\alpha:H\to A$ and $\beta:H\to B$ such that for each $h\in H$, $h\alpha(h)\in gH$ and $gh\beta(h)\in H$. Without loss, we assume $\alpha$ and $\beta$ are onto. Define $A(g)\equiv A\cup B^{-1}$. Define $\alpha_g:H\to A(g)$ to agree with $\alpha$ and $\beta_g:H\to (A(g))^{-1}$ to agree with $\beta$.  

If $a\in A\subset A(g)$ then any element in $\alpha^{-1}(a)$ will play the role of $h_1$ in our result. As $h_1a\in gH$, we write $h_1a=gh_2$ and $gh_2a^{-1}\in H$. If $b\in B^{-1}\subset A(g)$, let $h_2\in H$ be such that $\beta_g(h_2)=b^{-1}$. Then $gh_2\beta_g(h_2)=gh_2b^{-1}\in H$. Say $gh_2(b^{-1}=h_1$. Then $gh_2=h_1b\in gH$. 
\end{proof}

\begin{remark}
The next result applies locally. It would be interesting to extend this to a more general global result.
\end{remark}

\begin{corollary}\label{C7}
Suppose $g\in Comm(H,G)$. 

1)$A(g^{-1})$ may be selected to be $(A(g))^{-1}$ with $\alpha_{g^{-1}}\equiv\beta_g$ and $\beta_{g^{-1}}\equiv\alpha_g$. 

2) If $k\in A(g)$ then $A(k)$ may be selected to be $A(g)$ with $\alpha_k(h)\equiv\alpha_g(h_1h)$ where $h_1k=gh_1'$ for some 
 $h_1, h_1'\in H$ and $\beta_k(h)\equiv\beta_g(h_2h)$ where $gh_2=h_2'k$ for some $h_2,h_2'\in H$.
\end{corollary}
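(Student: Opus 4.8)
Both parts are verifications: in each case I would adopt the proposed data and check that the four conditions of Corollary \ref{C6} hold for the new commensurating element, deducing each condition from its counterpart for $g$. The one piece of genuinely new information needed is that the new element lies in $Comm(H,G)$; for $g^{-1}$ this is Corollary \ref{subgroup}, and for $k$ it is the containment $A(g)\subset Comm(H,G)$ supplied by Corollary \ref{C6}.

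For part (1) I would set $A(g^{-1})=(A(g))^{-1}$, $\alpha_{g^{-1}}\equiv\beta_g$, and $\beta_{g^{-1}}\equiv\alpha_g$. The double coset requirement is immediate since $(HgH)^{-1}=Hg^{-1}H$, and the codomains in condition (1) of Corollary \ref{C6} match because $(A(g^{-1}))^{-1}=A(g)$. Condition (2) is a one line rewrite: $gh\beta_g(h)\in H$ gives $h\beta_g(h)\in g^{-1}H$, while $h\alpha_g(h)\in gH$ gives $g^{-1}h\alpha_g(h)\in H$. Condition (3) follows by inverting the identity $[image(\alpha_g)\cup(image(\beta_g))^{-1}]=A(g)$.

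For part (2) I would first invoke condition (4) for $g$ to produce $h_1,h_1'\in H$ with $h_1k=gh_1'$ and $h_2,h_2'\in H$ with $gh_2=h_2'k$, then set $A(k)=A(g)$, $\alpha_k(h)\equiv\alpha_g(h_1h)$, and $\beta_k(h)\equiv\beta_g(h_2h)$. For the double coset condition I would note that $k\in HgH$ forces $HkH=HgH$, so $A(g)\subset HkH$. Condition (3) is the clean step: left translation $h\mapsto h_1h$ (respectively $h\mapsto h_2h$) permutes $H$, so $image(\alpha_k)=image(\alpha_g)$ and $image(\beta_k)=image(\beta_g)$, and the required identity is inherited from $g$.

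The fiddliest step, and the one I would be most careful with, is condition (2) for $k$, which is pure coset bookkeeping. From $h_1k=gh_1'$ I get $kH=h_1^{-1}gH$, so that $(h_1h)\alpha_g(h_1h)\in gH$ yields $h\alpha_k(h)=h\alpha_g(h_1h)\in h_1^{-1}gH=kH$. Dually, $gh_2=h_2'k$ gives $kh_2^{-1}g^{-1}=(h_2')^{-1}\in H$, and $g(h_2h)\beta_g(h_2h)\in H$, i.e. $\beta_g(h_2h)\in(h_2h)^{-1}g^{-1}H$, then yields $kh\beta_k(h)\in kh_2^{-1}g^{-1}H=H$. Finally, I would observe that condition (4) need not be rechecked separately in either part, as it follows formally from conditions (1)--(3): for $a=\alpha(h)\in image(\alpha)$ one obtains both $ha$ in the relevant left coset and, after writing $ha=(\cdot)h'$, the $\beta$-type relation, and symmetrically for $a\in(image(\beta))^{-1}$; this is exactly the argument used to establish condition (4) in the proof of Corollary \ref{C6}.
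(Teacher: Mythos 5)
Your proof is correct and takes essentially the same approach as the paper: the core verifications of condition (2) of Corollary \ref{C6} --- $g^{-1}h\alpha_g(h)\in H$ and $h\beta_g(h)\in g^{-1}H$ in part (1), and the translation-by-$h_1$ (resp.\ $h_2$) computations $h\alpha_g(h_1h)\in h_1^{-1}gH=kH$ and $kh\beta_g(h_2h)\in kh_2^{-1}g^{-1}H=H$ in part (2) --- are exactly the paper's. You are in fact somewhat more thorough than the published proof, which checks only condition (2) and leaves the double-coset containment, the image condition (3), and the observation that condition (4) follows formally from (2) and (3) implicit.
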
 

\begin{proof}
Suppose $h\in H$ then $h\alpha_g(h)\in gH$,  and $g^{-1}h\alpha_g(h)\in H$. So, we may define $\beta_{g^{-1}}(h)\equiv \alpha_g(h)\in A(g)$.
As $gh\beta_g(h)\in H$, $h\beta_g(h)\in g^{-1}H$ and we may define $\alpha_{g^{-1}}(h)\equiv \beta_g(h)\in (A(g))^{-1}$. 

Suppose $k\in A(g)$ then $h_1k=gh_1'$ for some $h_1,h_1'\in H$. Define $\alpha_{k}:H\to A(g)$ by $\alpha_{k}(h)\equiv\alpha_g(h_1h)$. Then $h_1h\alpha_{k}(h)=h_1h\alpha_g(h_1h)\in gH=h_1kH$ and so $h\alpha_{k}(h)\in kH$, as required.  

The equality $gh_2k^{-1}=h_2'$ is valid for some $h_2, h_2'\in H$. Define $\beta_k:H\to (A(g))^{-1}$ by $\beta_k(h)=\beta_g(h_2h)$. As $gh_2h\beta_k(h)=gh_2h\beta_g(h_2h)\in H$, substituting for $g$ shows $(h_2'kh_2^{-1})h_2h\beta_k(h)\in H$ and $kh\beta_k(h)\in H$. 
\end{proof}

If $S$ is a finite generating set for a group $G$, $\Gamma(G,S)$ the Cayley graph of $G$ with respect to $S$, and $H$ a subgroup of $G$, then for any $g_1, g_2 \in G$, the {\it Hausdorff} distance between $g_1H$ and $g_2H$, denoted $D_S(g_1H,g_2H)$, is the smallest integer $K$ such that for each element  $h$ of $H$ the edge path distance from $g_1h$ to $g_2H$ in $\Gamma$ is $\leq K$ and the edge path distance from $g_2h$ to $g_1H$ in $\Gamma$ is $\leq K$. If no such $K$ exists, then $D_S(g_1H,g_2H)=\infty$. 

As a direct consequence of theorem \ref{T1} we have:

\begin{corollary} \label{C8}
Suppose $S$ is a finite generating set for a group $G$ and $H$ is a subgroup of $G$, then $g\in G$ is in $Comm(H,G)$ iff the Hausdorff distance $D_S(H,gH)<\infty$ iff $D_S(H,gHg^{-1})<\infty$. 

In particular, a subgroup $Q$ of a finitely generated group $G$ is quasi-normal in $G$ iff the Hausdorff distance $D(Q,gQ)$ is finite for all $g\in G$ iff $D(Q,gQg^{-1})$ is finite for all $g\in G$.
\end{corollary}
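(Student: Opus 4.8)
The plan is to read Theorem \ref{T1} geometrically, using the elementary fact that in $\Gamma(G,S)$ the word metric is left invariant and $d(x,xs)=|s|_S$ for all $x,s\in G$. Under this dictionary the clause ``$ha\in gH$ for some $a\in A$'' says exactly that $h$ lies within $\max_{a\in A}|a|_S$ of $gH$, and ``$ghb\in H$ for some $b\in B$'' says that the element $gh$ of $gH$ lies within $\max_{b\in B}|b|_S$ of $H$. As $h$ ranges over $H$ these are precisely the two halves of the Hausdorff condition $D_S(H,gH)<\infty$, so the first equivalence reduces to trading finite sets for uniform distance bounds and back.

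For $g\in Comm(H,G)\Rightarrow D_S(H,gH)<\infty$, I would take the finite sets $A,B$ of Theorem \ref{T1} and set $K=\max\{|a|_S,|b|_S:a\in A,\,b\in B\}$; the witnesses for each $h$ give $d(h,gH)\le K$ and $d(gh,H)\le K$. Conversely, if $D_S(H,gH)=K<\infty$, then for each $h\in H$ a nearest point $gh'\in gH$ yields $a=h^{-1}gh'$ with $ha\in gH$ and $|a|_S\le K$, and a nearest point $h''\in H$ to $gh$ yields $b=h^{-1}g^{-1}h''$ with $ghb\in H$ and $|b|_S\le K$; taking $A=B$ to be the radius-$K$ ball of $G$ and invoking Theorem \ref{T1} finishes it. Finiteness of this ball is the sole use of finite generation and is the crux of the converse.

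For the conjugate statement I would resist transporting the previous equivalence along right multiplication by $g^{-1}$, which is \emph{not} an isometry of $\Gamma$, and instead feed the parenthetical reformulation of Theorem \ref{T1} into the same dictionary. Writing $A'=Ag^{-1}$ and $B'=gB$ (again finite), its clauses $h(ag^{-1})\in gHg^{-1}$ and $(ghg^{-1})(gb)\in H$ become $\sup_{h\in H}d(h,gHg^{-1})<\infty$ and $\sup_{k\in gHg^{-1}}d(k,H)<\infty$, using that $ghg^{-1}$ exhausts $gHg^{-1}$ as $h$ runs over $H$; that is, $D_S(H,gHg^{-1})<\infty$, the distance now read as the Hausdorff distance between the two subgroups $H$ and $gHg^{-1}$. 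Both directions then repeat verbatim with radius-$K$ balls. This decision to invoke the conjugated formulation rather than move the $gH$ result by a non-isometric translation is the step I expect to be the main obstacle.

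Finally, ``$Q$ quasi-normal'' unwinds as $Comm(Q,G)=G$, i.e. every $g\in G$ lies in $Comm(Q,G)$, so applying the two equivalences element by element gives the stated biconditionals; and since membership in $Comm(Q,G)$ is independent of $S$, so is finiteness of the Hausdorff distance, which is exactly what lets the subscript $S$ be dropped in the concluding statement.
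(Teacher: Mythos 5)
Your proposal is correct and follows exactly the route the paper intends: the paper offers no written proof, stating only that the corollary is ``a direct consequence of theorem \ref{T1},'' and your argument is precisely that translation --- finite sets $A,B$ give uniform distance bounds, and conversely finiteness of balls in $\Gamma(G,S)$ (the one place finite generation enters) converts a finite Hausdorff distance back into finite sets, with the conjugate version read off from the parenthetical form of Theorem \ref{T1}. Your explicit handling of the generating-set independence in the quasi-normal case is a detail the paper leaves implicit, but the approach is the same.
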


\begin{corollary} \label{C9}
Suppose $H$ is a subgroup of a group $G$ and $g\in Comm(H,G)$ then $gH\subset \cup_{a\in A(g)}Ha$ and $Hg\subset \cup _{a\in A(g)}aH$. (Where $A(g)$ is finite.)
\end{corollary}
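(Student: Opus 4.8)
The plan is to read both inclusions directly off the functions $\alpha_g$ and $\beta_g$ furnished by Corollary \ref{C6}; no new machinery is needed, and the only real care is the bookkeeping of inverses forced by the fact that $A(g)$ need not be symmetric. For the inclusion $gH\subset\cup_{a\in A(g)}Ha$, I would take an arbitrary $gh\in gH$ with $h\in H$ and invoke property (2) of Corollary \ref{C6}, namely $gh\beta_g(h)\in H$. Setting $h'=gh\beta_g(h)\in H$ and solving gives $gh=h'\,\beta_g(h)^{-1}$. Since $\beta_g$ maps into $(A(g))^{-1}$, the element $\beta_g(h)^{-1}$ lies in $A(g)$, so $gh\in H\,\beta_g(h)^{-1}\subset\cup_{a\in A(g)}Ha$. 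As $h$ was arbitrary, this establishes the first inclusion.

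For the inclusion $Hg\subset\cup_{a\in A(g)}aH$ I would exploit the symmetry recorded in Corollary \ref{C7}(1). Since $g^{-1}\in Comm(H,G)$ and $A(g^{-1})$ may be taken to be $(A(g))^{-1}$, applying the inclusion just proved to $g^{-1}$ yields $g^{-1}H\subset\cup_{a\in A(g)}Ha^{-1}$. Inverting every element and using $H^{-1}=H$ converts the left-hand side into $Hg$ and the right-hand side into $\cup_{a\in A(g)}aH$, which is exactly what is wanted. Alternatively one can argue directly: for $hg\in Hg$, apply the $\alpha$-part of property (2) to $h^{-1}$ to get $h^{-1}\alpha_g(h^{-1})=gk$ for some $k\in H$; inverting gives $hg=\alpha_g(h^{-1})\,k^{-1}\in\alpha_g(h^{-1})H$, and $\alpha_g(h^{-1})\in A(g)$ by construction.

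I expect essentially no obstacle here, since the statement is a direct restatement of Corollary \ref{C6} in the language of unions of cosets. The one point that must be handled with attention is the asymmetry of $A(g)$: the first inclusion naturally produces right cosets $Ha$ through $\beta_g$ (whose image sits in $(A(g))^{-1}$), while the second produces left cosets $aH$ through $\alpha_g$, so one must track which of $a$ and $a^{-1}$ appears and not conflate them when passing between the two inclusions. Finiteness of the index set is immediate because $A(g)$ is finite, completing the proof.
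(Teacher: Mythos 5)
Your proposal is correct and follows essentially the same route as the paper: the first inclusion is read off from property (2) of Corollary \ref{C6} via $\beta_g$, and the second is obtained by applying the first inclusion to $g^{-1}$ and inverting, using $A(g^{-1})=(A(g))^{-1}$ from Corollary \ref{C7}(1) (which the paper uses implicitly in writing $(A(g^{-1}))^{-1}\cdot H = A(g)\cdot H$). Your alternative direct argument via $\alpha_g(h^{-1})$ is also valid and is a nice way to avoid invoking Corollary \ref{C7} at all, but it does not change the substance.
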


\begin{proof}
By corollary \ref{C6}, for each $h\in H$ there is $a\in A(g)$ such that $gha^{-1}\in H$. Then $gh\in Ha$ and $gH\subset H\cdot A(g)$. Since $g^{-1}\in Comm(H,G)$ we have $g^{-1} H\subset H\cdot A(g^{-1})$. Inverting,  $Hg\subset (A(g^{-1}))^{-1}\cdot H=A(g)\cdot H$. 
\end{proof}

\section {Examples and Basic Facts for Quasi-Normal Subgroups}
Recall that a subgroup $Q$ of a group $G$ is quasi-normal if $G=Comm(Q,G)$. In order to check that a subgroup $Q$ of a group $G$ is quasi-normal it suffices to show that a set of generators of $Q$ is contained in $Comm(Q,G)$. This is particularly useful when $Q$ is finitely generated. 

\begin{example} \label{E1}
We show that the subgroup $\langle x\rangle$ is quasi-normal in the Baumslag-Solitar group $BS(m,n)\equiv \langle t,x:t^{-1}x^mt=x^n\rangle$.

We consider the case $G=\langle t,x:t^{-1}xt=x^2\rangle$. The other cases are analogous. Observe that $x^{-1}\langle x\rangle x=x\langle x\rangle x^{-1}=\langle x\rangle$,  $t^{-1}\langle x\rangle t\cap \langle x\rangle =\langle x^2\rangle$ and $t\langle x\rangle t^{-1} \cap \langle x\rangle=\langle x\rangle$.  $\square$
\end{example}

\begin{lemma}\label{F0} 
Suppose $Q$ is a quasi-normal subgroup of a group $G$ and $H$ is a subgroup of $G$, then $Q\cap H$ is quasi-normal in $H$. 
\end{lemma}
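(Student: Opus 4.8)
The plan is to verify directly that every element $h\in H$ commensurates $K:=Q\cap H$ inside $H$; that is, that $hKh^{-1}\cap K$ has finite index in both $hKh^{-1}$ and $K$. Since this says exactly that $h\in Comm(K,H)$ for arbitrary $h\in H$, it gives $Comm(K,H)=H$ and hence quasi-normality of $K$ in $H$.

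First I would exploit that $h\in H$ forces $hHh^{-1}=H$, so that
$hKh^{-1}=h(Q\cap H)h^{-1}=hQh^{-1}\cap hHh^{-1}=hQh^{-1}\cap H$.
Intersecting with $K=Q\cap H$ then produces the clean identity $hKh^{-1}\cap K=(hQh^{-1}\cap Q)\cap H$. This reduces the whole problem to controlling the index of $(hQh^{-1}\cap Q)\cap H$ inside $Q\cap H$.

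The key input is the quasi-normality of $Q$ in $G$: since $h\in H\le G$, the subgroup $P:=hQh^{-1}\cap Q$ has finite index in $Q$. I would then invoke the standard fact that if $P$ has finite index in $Q$ and $L$ is any subgroup of $Q$, then $P\cap L$ has finite index in $L$ (the map $L/(P\cap L)\to Q/P$ sending $l(P\cap L)\mapsto lP$ is injective, so $[L:P\cap L]\le [Q:P]<\infty$). Applying this with $L=K=Q\cap H\le Q$, and noting $P\cap L=(hQh^{-1}\cap Q)\cap H=hKh^{-1}\cap K$, yields $[K:hKh^{-1}\cap K]<\infty$.

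For the second index condition I would run the identical argument with $h^{-1}\in H$ in place of $h$, obtaining $[K:h^{-1}Kh\cap K]<\infty$; conjugating by $h$ (an index-preserving isomorphism) converts this into $[hKh^{-1}:hKh^{-1}\cap K]<\infty$, which finishes the verification. I do not expect a genuine obstacle here: the only real content is the finite-index intersection lemma together with the conjugation bookkeeping in the second paragraph, and both are routine once the identity $hKh^{-1}\cap K=(hQh^{-1}\cap Q)\cap H$ is in hand.
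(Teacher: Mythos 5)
Your proposal is correct and follows essentially the same route as the paper: both rest on the identity $hKh^{-1}\cap K=(hQh^{-1}\cap Q)\cap H$ (valid because $h\in H$ normalizes $H$) together with the standard fact that intersecting a finite-index subgroup of $Q$ with a subgroup preserves finite index. The paper's proof is just a terser version of yours, handling the second index condition by intersecting the statement ``$h^{-1}Qh\cap Q$ has finite index in $h^{-1}Qh$'' with $H$ rather than by your equivalent symmetry-and-conjugation step.
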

\begin{proof}
For each $h\in H$, $(h^{-1}Qh)\cap Q$ has finite index in both $Q$ and $h^{-1}Qh$. Then $(h^{-1}Qh)\cap Q\cap H\equiv [h^{-1}(Q\cap H)h]\cap (Q\cap H)$ has finite index in both $Q\cap H$ and $(h^{-1}Qh)\cap H\equiv h^{-1}(Q\cap H)h$. 
\end{proof}

\begin{lemma}\label{F1}
If $Q$ is a normal, finite or a finite index subgroup of a group $G$, then $Q$ is quasi-normal in $G$. If $Q$ is quasi-normal in $G$ then for any automorphism $\alpha$ of $G$, $\alpha(Q)$ is quasi-normal in $G$.
\end{lemma}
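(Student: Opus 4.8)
The plan is to verify the four separate assertions in turn, each reducing to an elementary fact about finite index combined with the definition of the commensurator. Throughout I fix an arbitrary $g\in G$ and check that $g\in Comm(Q,G)$, which by definition requires $gQg^{-1}\cap Q$ to have finite index in both $gQg^{-1}$ and $Q$; since $g$ is arbitrary this yields $Comm(Q,G)=G$.

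For the normal case, conjugation fixes $Q$, so $gQg^{-1}=Q$ and the intersection equals $Q$, which has index one in each factor. For the finite case, $gQg^{-1}$ is again finite, so both $Q$ and $gQg^{-1}$ are finite and any subgroup of a finite group has finite index; in particular their common intersection does. For the finite-index case, conjugation is an automorphism preserving index, so $gQg^{-1}$ also has finite index in $G$, and the intersection of two finite-index subgroups of $G$ has finite index in $G$, hence a fortiori finite index in each of $Q$ and $gQg^{-1}$. In all three cases $g\in Comm(Q,G)$.

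For the final assertion, suppose $Q$ is quasi-normal and $\alpha$ is an automorphism of $G$. The key observation is that surjectivity of $\alpha$ lets me write any $g\in G$ as $g=\alpha(g')$ for some $g'\in G$; then $g\alpha(Q)g^{-1}=\alpha(g'Qg'^{-1})$, and since $\alpha$ is injective it commutes with intersection, so $g\alpha(Q)g^{-1}\cap\alpha(Q)=\alpha(g'Qg'^{-1}\cap Q)$. As $Q$ is quasi-normal, $g'Qg'^{-1}\cap Q$ has finite index in both $g'Qg'^{-1}$ and $Q$, and because the isomorphism $\alpha$ preserves the index of a subgroup, its image has finite index in $\alpha(g'Qg'^{-1})=g\alpha(Q)g^{-1}$ and in $\alpha(Q)$. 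Thus $g\in Comm(\alpha(Q),G)$, and $\alpha(Q)$ is quasi-normal.

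I expect no serious obstacle, as each part is forced by the definition together with standard index facts. The only point requiring genuine care is the automorphism case, where it is essential to use the surjectivity of $\alpha$ to realize every conjugating element $g$ as $\alpha(g')$; without this reduction one cannot transfer the quasi-normality hypothesis on $Q$ across $\alpha$, and after it the argument proceeds purely from the fact that an isomorphism preserves both intersections and indices.
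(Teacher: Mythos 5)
Your proof is correct in all four cases, and the only point of care you flag (using surjectivity of $\alpha$ to write $g=\alpha(g')$ before transporting the finite-index conditions) is handled properly. The paper actually states this lemma without any proof, treating it as routine, and your argument is exactly the routine verification one would supply: conjugation-invariance for the normal case, finiteness of $gQg^{-1}\cap Q$ inside finite groups for the finite case, the fact that finite index in $G$ implies finite index in any intermediate subgroup for the finite-index case, and preservation of intersections and indices under the isomorphism $\alpha$ for the last assertion.
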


\begin{proposition}\label{F2}
Suppose $A$ and $B$ are quasi-normal subgroups of a group $G$. Then $A\cap B$ is quasi-normal in $G$. 
\end{proposition}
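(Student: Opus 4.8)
The plan is to work directly from the definition of quasi-normality: I must show that for every $g \in G$, the intersection $(g(A\cap B)g^{-1}) \cap (A\cap B)$ has finite index in both $A\cap B$ and $g(A\cap B)g^{-1}$. Fix an arbitrary $g \in G$. Since $A$ is quasi-normal, $g \in Comm(A,G)$, so $(gAg^{-1})\cap A$ has finite index in both $A$ and $gAg^{-1}$; similarly, since $B$ is quasi-normal, $(gBg^{-1})\cap B$ has finite index in both $B$ and $gBg^{-1}$. My goal is to combine these two finite-index facts to control the index of $(gAg^{-1})\cap(gBg^{-1})\cap A\cap B$ inside $A\cap B$.

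The key observation I would use is the standard fact that intersecting with a fixed subgroup does not increase index: if $X$ has finite index in $Y$, then $X\cap Z$ has finite index in $Y\cap Z$ for any subgroup $Z$ (the cosets of $X\cap Z$ in $Y\cap Z$ inject into the cosets of $X$ in $Y$). First I would apply this with $Y=A$, $X=(gAg^{-1})\cap A$, and $Z=B$, which gives that $(gAg^{-1})\cap A\cap B$ has finite index in $A\cap B$. Next I would apply it again with $Y=B$, $X=(gBg^{-1})\cap B$, and $Z=(gAg^{-1})\cap A$, obtaining that $(gBg^{-1})\cap B\cap(gAg^{-1})\cap A$ has finite index in $((gAg^{-1})\cap A)\cap B$. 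Composing these two finite-index relations (index of a subgroup in a chain multiplies) yields that $(gAg^{-1})\cap(gBg^{-1})\cap A\cap B = (g(A\cap B)g^{-1})\cap(A\cap B)$ has finite index in $A\cap B$.

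To finish I need the symmetric conclusion, namely that this same intersection has finite index in $g(A\cap B)g^{-1}$. The cleanest route is to note that the whole argument is symmetric under replacing $g$ by $g^{-1}$: running the identical computation with $g^{-1}$ in place of $g$ shows $(g^{-1}(A\cap B)g)\cap(A\cap B)$ has finite index in $A\cap B$, and conjugating this relation by $g$ (which preserves indices, since conjugation is an automorphism) shows $(g(A\cap B)g^{-1})\cap(A\cap B)$ has finite index in $g(A\cap B)g^{-1}$. Alternatively, I could invoke Lemma \ref{F1} to pass between a statement and its $g^{-1}$-version. Since $g$ was arbitrary, $G = Comm(A\cap B, G)$, as required.

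I do not expect any serious obstacle here; the only point demanding care is keeping the two applications of the index-preservation lemma in the correct order so that the intermediate subgroups line up into a genuine chain $ (g(A\cap B)g^{-1})\cap(A\cap B) \le (gAg^{-1})\cap A\cap B \le A\cap B$ whose successive indices are each finite. The symmetric half is routine once one notices the $g\leftrightarrow g^{-1}$ symmetry, so the substance of the proof is the single multiplicativity-of-index computation.
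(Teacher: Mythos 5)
Your proof is correct, but it follows a genuinely different route from the paper's. You work straight from the definition of the commensurator, and your engine is the standard index lemma: if $X$ has finite index in $Y$, then $X\cap Z$ has finite index in $Y\cap Z$ for any subgroup $Z$ (cosets inject). Applying this twice, composing along the chain $g(A\cap B)g^{-1}\cap(A\cap B)\leq (gAg^{-1})\cap A\cap B\leq A\cap B$ by multiplicativity of index, using the identity $(gAg^{-1})\cap(gBg^{-1})=g(A\cap B)g^{-1}$, and then obtaining the second half of the commensurator condition by running the argument for $g^{-1}$ and conjugating by $g$ (conjugation preserves indices) --- all of that is sound. The paper instead argues inside the framework it set up in Corollary \ref{C6}: writing $q\alpha_A(q)=ga_q$ and $q\alpha_B(q)=gb_q$ for the finite-image functions $\alpha_A=\alpha_{(g,A,G)}$ and $\alpha_B=\alpha_{(g,B,G)}$, it observes that the discrepancy $\tau(q)=a_q^{-1}b_q$ takes only finitely many values, chooses a representative $x(t)\in A\cap B$ for each value $t$, and corrects: $q\alpha_A(q)a_{x(\tau(q))}^{-1}=ga_qa_{x(\tau(q))}^{-1}\in g(A\cap B)$, so that $q\mapsto \alpha_A(q)a_{x(\tau(q))}^{-1}$ is a finite-image function witnessing the condition of Theorem \ref{T1}; an analogous construction gives the $\beta$ function. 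What your argument buys is brevity and elementarity: it is the classical index computation and needs none of the $A(g)$, $\alpha$, $\beta$ apparatus. What the paper's argument buys is explicit data: it exhibits $A(g,A\cap B,G)$ and the functions $\alpha_{(g,A\cap B,G)}$, $\beta_{(g,A\cap B,G)}$ in terms of the corresponding data for $A$ and $B$, consistent with how the paper tracks and reuses this data in Lemmas \ref{F3}--\ref{F6} and in its geometric applications for finitely generated groups. One small caveat about your aside: Lemma \ref{F1} cannot substitute for the conjugation step, since its automorphism clause applies only to subgroups already known to be quasi-normal --- exactly what is being proved; but since your main route carries out that step correctly by hand, this does not affect the proof.
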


\begin{proof}
Fix $g\in G$ and let $\alpha_A\equiv \alpha_{(g,A,G)}$ and $\alpha_B\equiv\alpha_{(g,B,G)}$. For each $q\in A\cap B$, $q\alpha_A(q)=ga_q$ and $q\alpha_B(q)=gb_q$ for some $a_q\in A$ and $b_q\in B$. 
For each $q\in A\cap B$, define $\tau(q)\equiv a_q^{-1}b_q=\alpha_A(q)(\alpha_B(q))^{-1}$. 
Let $T$ be the finite set $\{\alpha_A(q)(\alpha_B(q))^{-1}: q\in A\cap B\}\equiv \tau(A\cap B)$. For each $t\in T$ choose $x(t)\in A\cap B$ such that $\tau(x(t))=t$. 

Now, for each $q\in A\cap B$, $\tau(q)=\tau(x(\tau(q)))$. So, $a_q^{-1}b_q=a_{x(\tau (q))}^{-1}b_{x(\tau (q))}$. 
Then $a_qa_{x(\tau (q))}^{-1}=b_qb_{x(\tau (q))}^{-1}\in A\cap B$. So for all $q\in A\cap B$, 
$$q\alpha_A(q)a_{x(\tau (q))}^{-1}=ga_qa_{x(\tau (q))}^{-1}\in g(A\cap B).$$ 
Since $\{\alpha _A(q)a_{x(t)}^{-1}: q\in A\cap B \hbox{ and } t\in T\}$ is finite, we can define $\alpha_{(g, A\cap B, G)}(q)\equiv \alpha_A(q)a_{x((\tau (q))}^{-1}$ for all $q\in A\cap B$. 

We will define $\beta_{(g, A\cap B, G)}$ in analogous fashion to complete the proof.  
Again fix $g\in G$ and let $\beta_A\equiv \beta_{(g,A,G)}$ and $\beta_B\equiv \beta_{(g, B, G)}$. For each $q\in A\cap B$, $gq\beta _A(q)=a_q'\in A$ and $gq\beta_B(q)=b_q'\in B$.

For each $q\in A\cap B$, define $\tau'(q)\equiv (a_q')^{-1}b_q'=(\beta_A(q))^{-1}\beta_B(q)$. 
Let $T'$ be the finite set $\{(\beta_A(q))^{-1}\beta_B(q): q\in A\cap B\}\equiv \tau'(A\cap B)$. For each $t\in T'$ choose $y(t)\in A\cap B$ such that $\tau'(y(t))=t$. 

For $q\in A\cap B$, $\tau'(q)=\tau'(y(\tau'(q)))$. So, $(a_q')^{-1}b'_q=(a_{y(\tau'(q))}')^{-1}b'_{y(\tau'(q))}$. Then $a_q'(a_{y(\tau'(q))}')^{-1}=b_q'(b_{y(\tau'(q))}')^{-1}\in A\cap B$. So for all $q\in A\cap B$, 
$$gq\beta_A(q)(a_{y(\tau'(q))}')^{-1}=a_q'(a_{y(\tau'(q))}')^{-1}\in A\cap B$$ 
Since $\{\beta _A(q)(a_{y(t)}')^{-1}: q\in A\cap B \hbox{ and } t\in T'\}$ is finite, we can define $\beta_{(g, A\cap B, G)}(q)\equiv \beta_A(q)(a_{y(\tau'(q))}')^{-1}$ for all $q\in A\cap B$. 
\end{proof}

The arbitrary intersection of quasi-normal subgroups need not be quasi-normal. In 1949, M. Hall Jr. proved \cite{MH} that free groups are subgroup separable.  A group $G$ is {\it subgroup separable} if any finitely generated subgroup of $G$ is the intersection of subgroups of finite index in $G$. In particular, any infinite cyclic subgroup $A$ of $F_2\equiv \langle x,y\rangle$, the free group of rank 2, is the intersection of subgroups of finite index in $F_2$.  By lemma \ref{F1}, each subgroup of finite index in $F_2$ is quasi-normal in $F_2$, but if  $A=\langle x\rangle$, then $\langle x\rangle \cap y\langle x\rangle y^{-1}=\{1\}$. So $A$ is the intersection of quasi-normal subgroups (of finite index in $F_2$), but $A$ is not quasi-normal in $F_2$.

The next example shows that the ascending union of quasi-normal subgroups is not necessarily quasi-normal. 

\begin{example} \label{E2} 
Let $$H\equiv \langle x_0,x_1, \ldots :x_{0}^{2^k}=x_{k}^2, \hbox{ for }k\geq 1, [x_i,x_j]=1\hbox{ for }i,j\geq 0\rangle\hbox{ and }$$ 
$$H_n\equiv \langle x_0,\ldots ,x_n:x_0^{2^k}=x_k^2\hbox{ for }1\leq k\leq n, [x_i,x_j]=1\hbox{ for }0\leq i,j\leq n\rangle .$$ 

The map $i_n(x_k)=x_k$ for $0\leq k\leq n$,  and the map $q_n(x_k)=x_k$ for $0\leq k\leq n$ and $q_n(x_k)=x_0^{2^{k-1}}$ for $k>n$ extend to homomorphisms $i_n:H_n\to H$ and $q_n:H\to H_n$. The composition $q_ni_n$ is the identity on $H_n$ and so the subgroup of $H$ generated by $\{x_0,\ldots ,x_n\}$ is isomorphic to $H_n$ (and a retract of $H$). We identify $H_n$ with this subgroup. As $H_0\equiv \langle x_0\rangle$ is infinite cyclic, $x_n$ is of infinite order in $H$ for all $n$. Consider the monomorphism of $H_0$ determined by $x_0\to x_0^2$. If $G$  is the resulting HNN-extenssion, then $G$ has presentation:
$$G\equiv \langle t,x_0,x_1, \ldots :t^{-k}x_{0}t^k=x_{0}^{2^k}=x_{k}^2, \hbox{ for }k\geq 1, [x_i,x_j]=1\hbox{ for }i,j\geq 0\rangle. $$ 

Note that each generator in this presentation of $G$ has infinite order.  
Now 
$$(x_i^{-1}\langle x_0\rangle x_i)\cap \langle x_0\rangle=(x_i\langle x_0\rangle x_i^{-1})\cap \langle x_0\rangle =\langle x_0\rangle$$ and
$$ (t^{-1}\langle x_0\rangle t)\cap \langle x_0\rangle=\langle x_0^2\rangle \hbox{ and } (t\langle x_0\rangle t^{-1})\cap \langle x_0\rangle=\langle x_0\rangle.$$
Hence the infinite cyclic group $\langle x_0\rangle$ is quasi-normal in $G$.

The group  $\langle x_0\rangle$ has finite index in the abelian group $H_n\equiv \langle x_0,\ldots ,x_n\rangle$. In fact $H_n/\langle x_0\rangle$ is isomorphic to $\oplus _{i=1}^n \mathbb Z_2$. By lemma \ref{F6} (below), $H_n$ is quasi-normal in $G$ for all $n\geq 0$.

The group $H$ is the ascending union of the nested groups $H_n$. We prove $H$ is not quasi-normal in $G$, by showing  $t^{-1}Ht\cap H=\langle x_0^2\rangle$ (which has infinite index in $H$). 

Suppose  $g\in t^{-1}Ht\cap H$. Let $g=t^{-1}ht\in H$ for some $h\in H$. 
By lengths of normal forms for the HNN extension $G$, it must be that $h$ is an element of the associated subgroup $\langle x_0\rangle$ (i.e. elements of the base group $H$ of the HNN extension $G$, have length 1, but  $t^{-1}ht$ has length 3 unless $h\in\langle x_0\rangle$). Now $g=t^{-1}x_0^kt=x_0^{2k}$, and so $H$ is not quasi-normal in $G$.

In lemma \ref{F5}, we show that the inverse image  of a quasi-normal subgroup under an epimorphism is quasi-normal. In our example, consider the epimorphism $q_0':G\to \langle t,x_0:t^{-1}x_0t=x_0^2\rangle$ where $q_0'(t)=t$, $q_0'(x_0)=x_0$ and $q_0'(x_k)=x_0^{2^{k-1}}$, for $k>0$. The subgroup $(q_0')^{-1} (\langle x_0\rangle)$ is quasi-normal in $G$ and has generating set $\langle x_0,x_1, tx_2t^{-1}, \ldots , t^{k-1}x_kt^{-(k-1)},\ldots \rangle$. $\square$

\end{example}

In the next example we show that the union of two quasi-normal subgroups may not generate a quasi-normal subgroup, but lemma \ref{F3} shows  the union of a quasi-normal subgroup and a normal subgroup generates a quasi-normal subgroup.  

\begin{example}\label{E3} 
Let $H$ be the group $\langle x,y:x^2=y^2\rangle$ and $G$ the HNN extension with base $H$ and associates subgroups $\langle x^2\rangle$ and $\langle x^4\rangle$. Then $G$ has presentation 
$$G\equiv\langle x,y,t:x^2=y^2, t^{-1}x^2t=x^4\rangle.$$
To see that $\langle x^2\rangle=\langle y^2\rangle$ is quasi-normal in $G$, simply observe that:
$$ x\langle x^2\rangle x^{-1}=x^{-1}\langle x^2\rangle x=y\langle x^2\rangle y^{-1}=y^{-1}\langle x^2\rangle y=\langle x^2\rangle\hbox{ and }$$
$$ t^{-1}\langle x^2\rangle t\cap \langle x^2\rangle=\langle x^4\rangle \hbox{ and } t\langle x^2\rangle t^{-1}\cap \langle x^2\rangle=\langle x^2\rangle.$$
By lemma \ref{F6}, $\langle x\rangle $ and $\langle y\rangle$ are quasi-normal in $G$. We show that $\langle x,y\rangle $ is not quasi-normal in $G$. 
Note that 
$$\langle x,y\rangle /\langle x^2\rangle \cong \mathbb Z_2\ast \mathbb Z_2.$$
It suffices to show $t^{-1}\langle x,y\rangle t\cap \langle x,y\rangle =\langle x^4\rangle$. Suppose $z\in t^{-1}\langle x,y\rangle t\cap \langle x,y\rangle$. Write $z=t^{-1}ut$ for some $u\in \langle x,y\rangle$. As $z$ is in $\langle x,y\rangle$, the base group of the HNN extension $G$, lengths of normal forms implies that $u$ is in the domain associated subgroup $\langle x^2\rangle$ (elements of the base group have length $1$, and $t^{-1}ut$ has length $3$ unless $u\in \langle x^2\rangle$). Then $z=t^{-1}x^{2k}t=x^{4k}$. $\square$
\end{example}

\begin{lemma} \label{F3} 
 If $Q$ is quasi-normal in $G$ and $N$ is a normal subgroup of $G$ then the subgroup of $G$ generated by $Q$ and $N$ is quasi-normal in $G$. Furthermore, one may arrange things so that: 
 There is a transversal $T$ for $N$ in $\langle Q\cup N\rangle$ such that $T\subset Q$, and for any $g\in G$, $t\in T$ and $n\in N$: 
 $$\alpha_{(g,\langle Q\cup N\rangle,G)}(tn)=\alpha_{(g,Q,G)}(t) \hbox{ and }\beta_{(g,\langle Q\cup N\rangle,G)}(tn)=\beta_{(g,Q,G)}(t)$$
 \end{lemma}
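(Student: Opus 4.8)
The plan is to verify, for each fixed $g\in G$, the criterion of Theorem \ref{T1} for the subgroup $P\equiv\langle Q\cup N\rangle$, and to arrange the witnessing functions in the advertised form. First I would record the structural fact that, since $N$ is normal in $G$, the product $QN$ is already a subgroup, so $P=QN$. The natural surjection $Q\to QN/N$ has kernel $Q\cap N$, so a transversal $T$ for $Q\cap N$ in $Q$ is simultaneously a transversal for $N$ in $P$ with $T\subset Q$. Consequently every $p\in P$ has a unique expression $p=tn$ with $t\in T$ and $n\in N$, which is what makes the claimed formulas well defined.

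Next I would fix $g\in G$. Since $Q$ is quasi-normal, $g\in Comm(Q,G)$, so Corollary \ref{C6} supplies a finite set $A(g,Q,G)$ and functions $\alpha_{(g,Q,G)},\beta_{(g,Q,G)}$ satisfying $t\alpha_{(g,Q,G)}(t)\in gQ$ and $gt\beta_{(g,Q,G)}(t)\in Q$ for all $t\in T\subset Q$. I would then \emph{define} $\alpha_{(g,\langle Q\cup N\rangle,G)}(tn)\equiv\alpha_{(g,Q,G)}(t)$ and $\beta_{(g,\langle Q\cup N\rangle,G)}(tn)\equiv\beta_{(g,Q,G)}(t)$. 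By uniqueness of the decomposition these are well defined, they do not depend on the $N$-part $n$, and their images lie in the finite set $A(g,Q,G)\cup(A(g,Q,G))^{-1}$. This is exactly the ``furthermore'' assertion, so that part of the lemma is built into the construction rather than proved separately.

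The crux is verifying the two coset conditions of Theorem \ref{T1}, and this is precisely where normality of $N$ does the work. For $p=tn$ and $a=\alpha_{(g,Q,G)}(t)$, I would write $pa=tna=(ta)(a^{-1}na)$. Here $a\in G$ and $N$ is normal, so $a^{-1}na\in N\subseteq P$, while $ta=t\alpha_{(g,Q,G)}(t)\in gQ\subseteq gP$; writing $ta=gp_0$ with $p_0\in P$ gives $pa=g\bigl(p_0(a^{-1}na)\bigr)\in gP$. Symmetrically, for $b=\beta_{(g,Q,G)}(t)$ one has $gpb=gtnb=(gtb)(b^{-1}nb)$ with $gtb=gt\beta_{(g,Q,G)}(t)\in Q\subseteq P$ and $b^{-1}nb\in N\subseteq P$, hence $gpb\in P$. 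Since these finitely many witnesses exist for every $p\in P$, Theorem \ref{T1} yields $g\in Comm(P,G)$; as $g\in G$ was arbitrary, $P=\langle Q\cup N\rangle$ is quasi-normal.

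I expect the only real subtlety in an otherwise mechanical argument to be bookkeeping: keeping the two sides straight (membership in the left coset $gP$ versus membership in $P$ itself) and keeping the direction of conjugation correct, together with confirming that the shift elements $\alpha_{(g,Q,G)}(t)$ and $\beta_{(g,Q,G)}(t)$ genuinely lie in $G$ so that conjugation of $N$ by them stays inside $N$. Everything beyond that reduces to the normal-subgroup identity $a^{-1}Na=N$ and the absorption $PN=P$, so no index computations or normal-form arguments are required.
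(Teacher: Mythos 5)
Your proposal is correct and follows essentially the same route as the paper's own proof: both write $\langle Q\cup N\rangle = QN$ with a transversal $T\subset Q$, define $\alpha_{(g,\langle Q\cup N\rangle,G)}(tn)$ and $\beta_{(g,\langle Q\cup N\rangle,G)}(tn)$ to depend only on $t$, and verify the required coset memberships via the identity $tna=(ta)(a^{-1}na)$, using normality of $N$ to absorb the conjugated element. The only cosmetic differences are that you invoke Theorem \ref{T1} explicitly to conclude $g\in Comm(\langle Q\cup N\rangle,G)$ and you spell out the well-definedness of the decomposition $p=tn$, both of which the paper leaves implicit.
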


\begin{proof} 
In this proof, we write $A(g)$ for $A(g,Q,G)$ and $\alpha_g$ for $ \alpha_{(g,Q,G)}$ for all $g \in G$.
As $N$ is normal in $G$, each element $f\in \langle Q\cup N\rangle$ can be written as $qn$ for some $q\in Q$ and some $n\in N$. Hence there is a transversal $T\subset Q$ for $N$ in $\langle Q\cup N\rangle$.
Suppose $g\in G$,  $t\in T$ and $n\in Q$. By corollary \ref {C6} there is $q'\in Q$ such that  $t\alpha_g(t)=gq'$. 
Let $\alpha_g(t)^{-1}n\alpha_g(t)=n'\in N$.  
Then $$tn\alpha_g(t)=t\alpha_g(t) \alpha_g(t)^{-1}n\alpha_g(t)=gq'n'.$$ So we  define $\alpha_{(g,\langle Q\cup N\rangle,G)}(tn)=\alpha_{(g,Q,G)}(t)$ for all $n\in N$.

By corollary \ref {C6}, there is $\hat q\in Q$ such that  $gq\beta_g(t)=\hat q$. Let $\hat n=\beta_g(t)^{-1}n\beta_g(t)\in N$. Then 
$$gtn\beta_g(t)=gt\beta_g(t)\beta_g(t)^{-1}n\beta_g(t)=\hat q\hat n.$$
So we define $\beta_{(g,\langle Q\cup N\rangle,G)}(tn)=\beta_{(g,Q,G)}(t)$ for all $n\in N$.
\end{proof}

\begin{lemma}\label{F4}
Suppose $f:G_1\to G_2$ is an epimorphism and $Q$ is quasi-normal in $G_1$ then $f(Q)$ is quasi-normal in $G_2$ and we can arrange:

1) $A(f(g),f(Q),G_2)=f(A(g,Q,G_1))$ with 

2) $\alpha_{(f(g), f(Q), G_2)}=f(\alpha_{g})$ and 

3) $\beta_{(f(g), f(Q), G_2)}=f(\beta_{g})$ for all $g\in G_1$.
\end{lemma}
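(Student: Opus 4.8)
The plan is to transport the commensurator data for $Q$ in $G_1$ through $f$ to $f(Q)$ in $G_2$. Since $f$ is surjective, every element of $G_2$ has the form $f(g)$ with $g\in G_1$, so to prove $f(Q)$ quasi-normal it is enough to show $f(g)\in Comm(f(Q),G_2)$ for each $g$, and Theorem~\ref{T1} reduces this to producing finite subsets of $G_2$ that move $f(Q)$ into $f(g)f(Q)$ and conversely. First I would fix $g\in G_1$ and take the functions $\alpha_g\colon Q\to A(g)$ and $\beta_g\colon Q\to (A(g))^{-1}$ supplied by Corollary~\ref{C6}, together with the finite set $A(g)=A(g,Q,G_1)\subseteq QgQ$.

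The core computation is a one-line application of $f$ to the defining relations of Corollary~\ref{C6}. For $q\in Q$ write $q\alpha_g(q)=gq'$ and $gq\beta_g(q)=q''$ with $q',q''\in Q$; applying the homomorphism $f$ gives
$$f(q)\,f(\alpha_g(q))=f(g)f(q')\in f(g)f(Q)\quad\text{and}\quad f(g)f(q)\,f(\beta_g(q))=f(q'')\in f(Q).$$
Since every element of $f(Q)$ is $f(q)$ for some $q\in Q$, the finite sets $f(A(g))$ and $f((A(g))^{-1})=(f(A(g)))^{-1}$ satisfy the hypotheses of Theorem~\ref{T1} for $f(g)$ and $f(Q)$; hence $f(g)\in Comm(f(Q),G_2)$, and as $g$ was arbitrary $f(Q)$ is quasi-normal in $G_2$. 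Setting $A(f(g),f(Q),G_2):=f(A(g))$ establishes (1), and the inclusion $A(g)\subseteq QgQ$ pushes forward to $f(A(g))\subseteq f(Q)f(g)f(Q)$, as Corollary~\ref{C6} requires; property~(4) of that corollary transfers by applying $f$ to the witnesses $h_1,h_2$ on the $G_1$ side.

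For items (2) and (3) I would define $\alpha_{(f(g),f(Q),G_2)}$ and $\beta_{(f(g),f(Q),G_2)}$ to be the maps induced by $f\circ\alpha_g$ and $f\circ\beta_g$; the displayed computation shows they satisfy condition~(2) of Corollary~\ref{C6}, and surjectivity of $f$ onto $f(A(g))$ gives condition~(3). The one genuinely delicate point, and the step I expect to be the real obstacle, is well-definedness: $f(\alpha_g(q))$ is not a priori determined by $f(q)$ alone, since two preimages $q_1,q_2\in Q$ of the same element of $f(Q)$ may have $f(\alpha_g(q_1))\neq f(\alpha_g(q_2))$ (the auxiliary elements $q',q''$ above need not have equal $f$-images). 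I would resolve this by fixing once and for all a set-theoretic section $\sigma\colon f(Q)\to Q$ of $f|_Q$ and setting $\alpha_{(f(g),f(Q),G_2)}:=f\circ\alpha_g\circ\sigma$ and $\beta_{(f(g),f(Q),G_2)}:=f\circ\beta_g\circ\sigma$; any choice of $\sigma$ yields legitimate Corollary~\ref{C6} functions, so the equalities in (2)--(3) are to be read as ``induced by $f$'' via such a section. It is worth remarking explicitly that, in general, one cannot force the square $\alpha_{(f(g),f(Q),G_2)}\circ f=f\circ\alpha_g$ to commute on the nose, which is precisely why the choice of $\sigma$ is needed.
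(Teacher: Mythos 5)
Your proof is correct and takes essentially the same route as the paper's: apply $f$ to the defining relations of Corollary \ref{C6}, use surjectivity of $f$ to reach every element of $G_2$ and of $f(Q)$, and define the new $\alpha$, $\beta$ by choosing preimages. Your section $\sigma$ simply makes explicit the preimage choice that the paper leaves implicit in writing $\alpha_{(f(g),f(Q),G_2)}=f(\alpha_{g})$, so your well-definedness remark is a clarification of the same argument rather than a departure from it.
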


\begin{proof}
Let $g_2\in G_2$ and select $g_1\in G_1$ such that $f(g_1)=g_2$. For $q_2\in f(Q)$, let $q_1\in Q$ be such that $f(q_1)=q_2$. There is $q_1'\in Q$ such that $q_1\alpha_{g_1}(q_1)=g_1q_1'$. Then $q_2f(\alpha_{g_1}(q_1))=g_2f(q_1')$. So we may select $\alpha_{(g_2, f(Q), G_2)}=f(\alpha_{g_1})$ (as long as $f(g_1)=g_2$). Similarly $\beta_{(g_2, f(Q), G_2)}=f(\beta_{g_1})$.
\end{proof}

\begin{lemma} \label{F5}
Suppose $f:G_1\to G_2$ is a homomorphism and $Q$ is quasi-normal in $G_2$ then $f^{-1}(Q)$ is quasi-normal in $G_1$. Furthermore, as $Q\cap f(G_1)$ is quasi-normal in $f(G_1)$ (see lemma \ref {F0}) we can arrange:

1) $f(A(g_1,f^{-1}(Q), G_1))=A(f(g_1),Q\cap f(G_1),f(G_1))$,

2) $f(\alpha_{(g_1,f^{-1}(Q),G_1)}(q'))= \alpha _{(f(g_1),f(G_1)\cap Q,f(G_1))}(f(q'))$ for all $q'\in f^{-1}(Q)$, and 

3) $f(\beta_{(g_1,f^{-1}(Q),G_1)}(q'))=\beta _{(f(g_1),f(G_1)\cap Q,f(G_1))}(f(q'))$ for all $q'\in f^{-1}(Q)$.

\noindent Note that when $f$ is an epimorphism, 1), 2) and 3) simplify to:

$1'$) $f(A(g_1,f^{-1}(Q), G_1))=A(f(g_1),Q,(G_2)$,

$2'$) $f(\alpha_{(g_1,f^{-1}(Q),G_1)}(q'))= \alpha _{(f(g_1), Q,G_2)}(f(q'))$ for all $q'\in f^{-1}(Q)$, and 

$3'$) $f(\beta_{(g_1,f^{-1}(Q),G_1)}(q'))=\beta _{(f(g_1),Q,G_2)}(f(q'))$ for all $q'\in f^{-1}(Q)$.

\end{lemma}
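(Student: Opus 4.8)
The plan is to reduce first to the case of an epimorphism, and then to manufacture the data $A(g_1)$, $\alpha_{g_1}$, $\beta_{g_1}$ by choosing $f$-preimages of the corresponding data for $Q$ supplied by Corollary \ref{C6}. For the reduction, set $G_2'\equiv f(G_1)$ and $Q'\equiv Q\cap f(G_1)$. Lemma \ref{F0} gives that $Q'$ is quasi-normal in $G_2'$, and since $f$ already takes values in $G_2'$ we have $f^{-1}(Q)=f^{-1}(Q')$. Hence it suffices to prove the statement, with conclusions $1'$)--$3'$), for the epimorphism $f:G_1\to G_2'$ and the quasi-normal subgroup $Q'$; rewriting $G_2'=f(G_1)$ and $Q'=Q\cap f(G_1)$ turns $1'$)--$3'$) into the conclusions $1$)--$3$) recorded in the lemma, so the reduction loses nothing.

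So assume $f:G_1\to G_2$ is onto and write $P\equiv f^{-1}(Q)$, noting $\ker f\subseteq P$ since $1\in Q$. Fix $g_1\in G_1$ and put $g_2\equiv f(g_1)$. As $Q$ is quasi-normal, $g_2\in Comm(Q,G_2)$, so Corollary \ref{C6} supplies a finite set $A(g_2)\subset G_2$ and maps $\alpha_{g_2}:Q\to A(g_2)$, $\beta_{g_2}:Q\to(A(g_2))^{-1}$ with $q\alpha_{g_2}(q)\in g_2Q$ and $g_2q\beta_{g_2}(q)\in Q$ for each $q\in Q$. I would fix once and for all a single $f$-preimage for each element of $A(g_2)$, and declare the preimage of an inverse to be the inverse of the chosen preimage, so that the $\alpha/\beta$ structure of Corollary \ref{C6} is respected; let $A(g_1)\subset G_1$ be the resulting finite set, so $f(A(g_1))=A(g_2)$. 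Define $\alpha_{g_1}(p)$ to be the chosen preimage of $\alpha_{g_2}(f(p))$ and $\beta_{g_1}(p)$ the designated preimage of $\beta_{g_2}(f(p))$. By construction $f\circ\alpha_{g_1}=\alpha_{g_2}\circ f$ and $f\circ\beta_{g_1}=\beta_{g_2}\circ f$ on $P$, which is exactly $1'$)--$3'$).

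It remains to verify the two membership conditions that let Theorem \ref{T1} conclude $g_1\in Comm(P,G_1)$. Given $p\in P$, set $q\equiv f(p)\in Q$. Then $f\big(p\,\alpha_{g_1}(p)\big)=q\,\alpha_{g_2}(q)\in g_2Q$, say $q\alpha_{g_2}(q)=g_2q'$ with $q'\in Q$. Using surjectivity, choose $p'\in G_1$ with $f(p')=q'$; then $p'\in P$, and $f\big((g_1p')^{-1}p\,\alpha_{g_1}(p)\big)=1$, so $(g_1p')^{-1}p\,\alpha_{g_1}(p)\in\ker f\subseteq P$ and hence $p\,\alpha_{g_1}(p)\in g_1P$. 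The identical argument applied to $g_2q\beta_{g_2}(q)\in Q$ shows $g_1p\,\beta_{g_1}(p)\in P$. Taking the finite sets $image(\alpha_{g_1})$ and $image(\beta_{g_1})$ as the $A,B$ of Theorem \ref{T1}, we obtain $g_1\in Comm(P,G_1)$; since $g_1$ was arbitrary, $P$ is quasi-normal in $G_1$.

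These arguments are essentially mechanical once the bookkeeping is fixed; the one delicate point is the passage from $f(p\,\alpha_{g_1}(p))\in g_2Q$ back to $p\,\alpha_{g_1}(p)\in g_1P$, where I expect the main (though minor) obstacle to lie. It is precisely here that both the surjectivity of $f$ (to produce $p'$) and the containment $\ker f\subseteq P$ (to absorb the discrepancy $(g_1p')^{-1}p\,\alpha_{g_1}(p)$) are used. Without first reducing to an epimorphism this step would break down, which is why the opening reduction through Lemma \ref{F0} is essential rather than cosmetic.
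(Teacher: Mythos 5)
Your proposal is correct and follows essentially the same route as the paper: reduce to the epimorphism case via Lemma \ref{F0} (using $f^{-1}(Q)=f^{-1}(Q\cap f(G_1))$), then pull back the Corollary \ref{C6} data for $f(g_1)$ through chosen $f$-preimages and verify the Theorem \ref{T1} conditions. The only cosmetic difference is that the paper parametrizes $f^{-1}(Q)$ by a transversal for $\ker(f)$, defining $\alpha_{(g_1,f^{-1}(Q),G_1)}(tn)=(\alpha_{(f(g_1),Q,G_2)}(f(t)))'$, which coincides with your direct definition $p\mapsto(\alpha_{g_2}(f(p)))'$ since $f(tn)=f(t)$.
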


\begin{proof}
Lemma \ref{F0} implies $Q\cap f(G_1)$ is quasi-normal in $f(G_1)$ and $f^{-1} (Q)=f^{-1}(Q\cap f(G_1))$. Hence we may assume that $f$ is an epimorphism.

Let $T$ be a transversal for $N\equiv ker (f)$ in $f^{-1}(Q)$. Fix $g_1\in G_1$. For each element $a$ of $A(f(g_1),Q,G_2)$ pick $a'\in f^{-1}(a)$.  For $t\in T$ and $n\in N$, then $f(t)\in Q$, $\alpha _{(f(g_1),Q,G_2)}(f(t))\in A(f(g_1),Q,G_2),\hbox{  and }$
$$f(tn(\alpha _{(f(g_1),Q,G_2)}(f(t)))')=f(t)\alpha _{(f(g_1),Q,G_2)}(f(t))=f(g_1)q'\hbox{ for some }q'\in Q.$$
This implies $tn(\alpha _{(f(g_1),Q,G_2)}(f(t)))'\in g_1(f^{-1}(Q))$. 
Hence for all $t\in T$ and $n\in N$, we define $\alpha_{(g_1,f^{-1}(Q),G_1)}(tn)$ to be  $(\alpha _{(f(g_1),Q,G_2)}(f(t)))'$. Similarly, it makes sense to define $\beta_{(g_1,f^{-1}(Q),G_1)}(tn)$ to be  $(\beta _{(f(g_1),Q,G_2)}(f(t)))'$.
\end{proof}

\begin{lemma} \label{F6}
Suppose $Q$ is a quasi-normal subgroup of $G$ and $Q'$ is a subgroup of $G$ such that either $Q'$ has finite index in $Q$, or $Q$ has finite index in $Q'$, then  $Q'$ is quasi-normal in $G$. 
\end{lemma}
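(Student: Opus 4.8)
The plan is to reduce the lemma to the single structural fact that the commensurator of a subgroup depends only on its commensurability class. Write $A\sim B$ when the subgroups $A$ and $B$ are commensurable, so that by definition $g\in Comm(A,G)$ means precisely $gAg^{-1}\sim A$. The first observation is that the hypotheses force $Q\sim Q'$: if $Q'$ has finite index in $Q$ then $Q\cap Q'=Q'$ has finite index in $Q$ and index $1$ in itself, and symmetrically if $Q$ has finite index in $Q'$; in either case $Q\cap Q'$ has finite index in both.

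Next I would record the two closure properties of $\sim$ needed to run the argument. Conjugation preserves commensurability, since $(gAg^{-1})\cap(gBg^{-1})=g(A\cap B)g^{-1}$ and conjugation by $g$ is an isomorphism carrying $A\cap B\le A$ onto $g(A\cap B)g^{-1}\le gAg^{-1}$, so the relevant indices are unchanged; thus $A\sim B$ implies $gAg^{-1}\sim gBg^{-1}$. And $\sim$ is transitive: given $A\sim B$ and $B\sim C$, the group $A\cap B\cap C=(A\cap B)\cap(B\cap C)$ is the intersection inside $B$ of the finite-index subgroup $B\cap C$ with $A\cap B$, hence has finite index in $A\cap B$ and therefore in $A$; since $A\cap C\supseteq A\cap B\cap C$ this gives $A\cap C$ of finite index in $A$, and the same reasoning with the roles of $A$ and $C$ interchanged gives finite index in $C$, so $A\sim C$. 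Reflexivity and symmetry being immediate, $\sim$ is an equivalence relation.

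With these tools the proof is a short chain. Fix an arbitrary $g\in G$. Quasi-normality of $Q$ says $g\in Comm(Q,G)$, i.e. $gQg^{-1}\sim Q$. Applying conjugation-invariance to $Q\sim Q'$ gives $gQg^{-1}\sim gQ'g^{-1}$. Transitivity then chains
$$gQ'g^{-1}\sim gQg^{-1}\sim Q\sim Q',$$
so $gQ'g^{-1}\sim Q'$, which is exactly $g\in Comm(Q',G)$. As $g$ was arbitrary, $Comm(Q',G)=G$ and $Q'$ is quasi-normal.

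The only step carrying any real content is the transitivity of $\sim$, and within it the index bookkeeping showing $A\cap B\cap C$ lies with finite index in $A\cap B$; this is the main obstacle, resolved by the standard fact that intersecting a finite-index subgroup of a group with any subgroup of that group yields a subgroup of finite index in the latter. Everything else is formal, so I would not expect to need the $\alpha_g,\beta_g$ machinery of Section 2 here.
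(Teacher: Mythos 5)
Your proof is correct, but it takes a genuinely different route from the paper's. The paper proves Lemma \ref{F6} inside its running $\alpha/\beta$ framework: starting from the data $A(g,Q,G)$, $\alpha_{(g,Q,G)}$, $\beta_{(g,Q,G)}$ of Corollary \ref{C6}, it fixes coset representatives $q_1,\dots,q_n$ of $Q'$ in $Q$ (respectively of $Q$ in $Q'$) and explicitly builds $\alpha_{(g,Q',G)}$ and $\beta_{(g,Q',G)}$ by multiplying by suitable $q_i^{-1}$, thereby exhibiting the finite sets witnessing quasi-normality of $Q'$. You instead work directly with the definition of the commensurator as the set of $g$ with $gQg^{-1}$ commensurable to $Q$, and reduce everything to two clean structural facts: commensurability is an equivalence relation (transitivity being the only nontrivial point, handled correctly via the standard finite-index intersection argument), and it is preserved by conjugation. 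Your chain $gQ'g^{-1}\sim gQg^{-1}\sim Q\sim Q'$ then finishes the lemma in one line, and in fact proves something stronger than what is stated: $Comm(Q,G)=Comm(Q',G)$ for \emph{any} pair of commensurable subgroups $Q,Q'$ of $G$, nested or not. What your approach buys is brevity, generality, and independence from the Section 2 machinery; what the paper's approach buys is explicit constructive control of the sets $A(g)$ and the functions $\alpha_g,\beta_g$ for $Q'$ in terms of those for $Q$, in the same spirit as the ``furthermore, one may arrange'' clauses of Lemmas \ref{F3}, \ref{F4} and \ref{F5}, which matters for the paper's later quantitative and geometric applications even though Lemma \ref{F6} itself does not assert any such clause.
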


\begin{proof}
For each $g\in G$, consider the finite set $A(g,Q,G)$ and functions $\alpha_{(g,Q,G)}:Q\to A(g,Q,G)$ and $\beta_{(g,Q,G)}:Q\to A(g,Q,G)^{-1}$. If $Q'$ has finite index in $Q$ choose cosets $Q'q_1,\ldots , Q'q_n$ covering $Q$. Then for $g\in G$, and $q'\in Q'$, $q'\alpha_{(g,Q,G)}(q')=gq$ for some $q\in Q$. Pick $i\in \{1,\ldots , n\}$ such that $q=\bar qq_i$ for some $\bar q\in Q'$. Then $q'(\alpha_{(g,Q,G)}(q')q_i^{-1})=g\bar q$ and we may set $\alpha_{(g,Q',G)}(q')=\alpha_{(g,Q,G)}(q')q_i^{-1}$.

For $q'\in Q'$, there is $\bar q\in Q$ such that $gq'\beta_{(g,Q,G)}(q')=\bar q$. There is $\bar q'\in Q'$ and  $j\in \{1,\ldots , n\}$ such that $\bar q=\bar q'q_j$. So, $gq'\beta_{(g,Q,G)}(q')q_j^{-1}=\bar q'\in Q'$
Hence, we may choose $\beta_{(g,Q',G)}(q')=\beta_{(g,Q,G)}(q') q_j^{-1}$ and $Q'$ is quasi-normal in $G$.

Next assume $Q$ has finite index in $Q'$ and suppose the cosets $Qq'_1,\ldots , Qq'_n$ cover $Q'$. Let $q'\in Q'$. Then $q'=qq_i$ for some $q\in Q$ and some $i\in \{1,\ldots , n\}$. Now $q\alpha_{(g, Q,G)}(q)=g\bar q$ for some $\bar q\in Q\subset Q'$. So $ q'=g\bar q(\alpha_{(g, N,G)}(q))^{-1}q_i$. 
Hence we may define $\alpha_{(g,Q',G)}(q')=q_i^{-1}\alpha_{(g,Q,G)}(q)$.  

For $q'\in Q'$ there is $q\in Q$ and $i\in \{1,\ldots ,n\}$ such that $q'=qq_i$. Then $gq\beta_{(g,Q,G)}(q)=\bar q\in Q\subset Q'$ and $gq'q_i^{-1}\beta_{(g,N,Q)}(q)=\bar q$ and we may let $\beta_{g, Q',G)}=q_i^{-1}\beta_{(g,Q,G)}(q)$.
\end{proof}

\begin{lemma} \label{F7}
If $f:H\to H$ is a monomorphism and $f(H)$ has finite index in $H$, then $H$ (and by the previous fact $f(H)$) is a quasi-normal subgroup of the (ascending) HNN extension $G=H\ast _f$ 
\end{lemma}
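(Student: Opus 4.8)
The plan is to use that $Comm(H,G)$ is a subgroup of $G$ (Corollary \ref{subgroup}), so that it suffices to exhibit a generating set of $G$ inside $Comm(H,G)$. The ascending HNN extension $G=H\ast_f$ has presentation $\langle H,t\mid t^{-1}ht=f(h)\ \hbox{for}\ h\in H\rangle$, so $G$ is generated by $H\cup\{t\}$; and by Britton's Lemma for the ascending case the base group $H$ embeds in $G$ and the associated-subgroup relation holds as an honest equality $t^{-1}Ht=f(H)$. (Up to replacing $t$ by $t^{-1}$ we may take this orientation; since $Comm(H,G)$ is a subgroup, $t\in Comm(H,G)$ iff $t^{-1}\in Comm(H,G)$, so the HNN convention is immaterial.)

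First I would dispose of the elements of $H$: for each $h\in H$ we have $hHh^{-1}=H$, so $h\in N(H,G)\leq Comm(H,G)$, giving $H\subseteq Comm(H,G)$. Next I would handle the stable letter by showing $t^{-1}\in Comm(H,G)$. Taking $g=t^{-1}$, I must check that $gHg^{-1}\cap H=t^{-1}Ht\cap H$ has finite index in both $t^{-1}Ht$ and $H$. Using $t^{-1}Ht=f(H)\subseteq H$, this intersection equals $f(H)$, which has index $1$ in $t^{-1}Ht=f(H)$ and finite index in $H$ by hypothesis. Hence $t^{-1}\in Comm(H,G)$, and therefore $t\in Comm(H,G)$. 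Since $H\cup\{t\}$ generates $G$ and lies in the subgroup $Comm(H,G)$, we conclude $Comm(H,G)=G$, so $H$ is quasi-normal in $G$. Finally, $f(H)$ has finite index in the quasi-normal subgroup $H$, so Lemma \ref{F6} (the previous fact) immediately yields that $f(H)$ is quasi-normal in $G$ as well.

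The arithmetic here is essentially immediate; the only point requiring care is the HNN bookkeeping, namely confirming that $t^{-1}Ht=f(H)$ holds exactly in $G$ so that $H$ does not collapse and the intersection is precisely $f(H)$ rather than something larger. This is exactly the content of the normal-form theory for the ascending HNN extension, and I expect it to be the main, though routine, obstacle; once it is in hand the index computation and the reduction to generators make the verification formal.
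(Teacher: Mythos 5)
Your proof is correct, and its skeleton --- reduce, via Corollary \ref{subgroup}, to showing that the generating set $H\cup\{t\}$ lies in $Comm(H,G)$ --- is the same reduction the paper makes (``it suffices to show $\{t,t^{-1}\}\subset Comm(H)$''). Where you genuinely diverge is in how membership of the stable letter is verified. You work straight from the definition of the commensurator: the HNN relations give the exact equality $t^{-1}Ht=f(H)$ as subgroups of $G$ (with $H$ embedded by Britton's Lemma), hence $t^{-1}Ht\cap H=f(H)$, which has index $1$ in $t^{-1}Ht$ and finite index in $H$ by hypothesis; the subgroup property then gives $t\in Comm(H,G)$ for free, so you only ever check one element. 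The paper instead invokes its Theorem \ref{T1}: it fixes coset representatives $h_1,\dots,h_m$ with $H=\bigcup_{i} f(H)h_i$ and exhibits the finite sets of that theorem explicitly (from $ht=tf(h)\in tH$ one may take $A=\{t\}$, and from $th=h'(th_i)$ one may take $B=\{(th_1)^{-1},\dots,(th_m)^{-1}\}$), running this check separately for $t$ and for $t^{-1}$. The two verifications are equivalent --- Theorem \ref{T1} is precisely the bridge between them --- but yours is shorter and more elementary, isolating the single computation $t^{-1}Ht=f(H)$ that makes ascending HNN extensions work; the paper's version has the mild advantage of producing the explicit data $A(t)$, $\alpha_t$, $\beta_t$ of Corollary \ref{C6}, i.e.\ quantitative Hausdorff-distance control in the spirit of Corollary \ref{C8}, rather than only the qualitative finite-index statement. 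Your last step, deducing quasi-normality of $f(H)$ from Lemma \ref{F6}, is exactly what the parenthetical ``by the previous fact'' in the statement intends.
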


\begin{proof}
Consider the presentation of $H\ast_f$ given by: $$\langle t,H:t^{-1}ht=f(h) \hbox{ for all } h\in H\rangle.$$ 
Let $f(A)h_1,\ldots ,f(A)h_m$ be $f(H)$ cosets covering $H$. 
It suffices to show that $\{t,t^{-1}\}\subset Comm(H)$.

For each $h\in H$, $t^{-1}ht=f(h)\in H$. Hence $ht=tf(h)\in tH$ for all $h\in H$. 
For each $h\in H$, $th=tf(h')h_i$ for some $h'\in H$ and some $i\in \{1,\ldots ,m\}$. Then $th=t(t^{-1}h'th_i)=h'(th_i)$, and $th(th_i)^{-1}=h'\in H$. By theorem \ref{T1},  $t\in Comm(H)$. 

To see that $t^{-1}\in Comm(H)$, first observe that for each $h\in H$, $t^{-1}ht=f(t)\in H$. 
Next, observe that for each $h\in H$, $h=t^{-1}h'th_i$ 
for some $h'\in H$ and some $i\in \{1,\ldots ,m\}$. Then $h(th_i)^{-1}=t^{-1}h'\in t^{-1}H$. By theorem \ref{T1}, $t^{-1}\in Comm(H)$.
\end{proof}

\begin{lemma}\label{amal}
If $G=G_1\ast_QG_2$ and $Q$ is quasi-normal in $G_i$ for $i\in \{1,2\}$ then $Q$ is quasi-normal in $G$.
\end{lemma}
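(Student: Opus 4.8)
The plan is to exploit the fact that $Comm(Q,G)$ is a subgroup of $G$ (Corollary \ref{subgroup}) together with the fact that $G$ is generated by (the images of) $G_1$ and $G_2$. Since any subgroup of $G$ containing a generating set must be all of $G$, it suffices to prove that $G_1\cup G_2\subseteq Comm(Q,G)$; then $Comm(Q,G)=G$, which is precisely the assertion that $Q$ is quasi-normal in $G$.

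The key observation, which I would isolate first, is that the commensurability condition defining the commensurator is intrinsic to the subgroups involved and does not depend on the ambient group. Concretely, if $Q\le K\le G$ and $g\in K$, then the subsets $gQg^{-1}$, $Q$, and $gQg^{-1}\cap Q$ are the same whether viewed inside $K$ or inside $G$, and the indices $[Q:gQg^{-1}\cap Q]$ and $[gQg^{-1}:gQg^{-1}\cap Q]$ are merely counts of cosets, hence independent of whether one works in $K$ or in $G$. Therefore $g\in Comm(Q,K)$ if and only if $g\in Comm(Q,G)$, and in particular $Comm(Q,K)\subseteq Comm(Q,G)$.

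Applying this with $K=G_i$ for $i\in\{1,2\}$, the hypothesis that $Q$ is quasi-normal in $G_i$ reads $Comm(Q,G_i)=G_i$, so $G_i=Comm(Q,G_i)\subseteq Comm(Q,G)$. Taking the union over $i$ gives $G_1\cup G_2\subseteq Comm(Q,G)$, and the reduction of the first paragraph then finishes the argument.

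The only points requiring care are routine structural facts about amalgamated products: that the natural maps $G_i\to G$ are injective, so that $Q\le G_i\le G$ genuinely holds and the intrinsic-index observation applies, and that $G=\langle G_1\cup G_2\rangle$. Both are standard, so I do not expect a genuine obstacle; indeed the argument proves more than stated, namely that $Q$ is quasi-normal in any group generated by subgroups in each of which $Q$ is quasi-normal. A more computational alternative would assemble the finite sets $A$ and $B$ of Theorem \ref{T1} for a general element of $G$ via its normal form, but this is unnecessary once one recognizes that the relevant indices are intrinsic.
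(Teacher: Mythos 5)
Your proof is correct and is essentially the paper's own argument: both observe that the finite-index conditions defining the commensurator are intrinsic, so the hypothesis puts $G_1\cup G_2$ inside $Comm(Q,G)$, which is a subgroup (Corollary \ref{subgroup}) containing a generating set of $G$ and hence equals $G$. Your write-up merely makes explicit the intrinsicness of the indices and the injectivity of $G_i\to G$, which the paper leaves implicit.
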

\begin{proof} 
For each $g\in \{G_1\cup G_2\}$, $g^{-1}Qg\cap Q$ has finite index in both $Q$ and $g^{-1}Qg$, by hypothesis. As $G_1\cup G_2$ generates $G$ and is a subset of the subgroup $Comm(Q,G)$ of $G$ (see corollary \ref{subgroup}), $G=Comm(Q,G)$.
\end{proof}

 In \cite{MTo}, M. Mihalik and W. Towle proved that an infinite quasi-convex subgroup of a word hyperbolic group has finite index in its normalizer. 
 The same proof shows:
\begin{theorem} \label{quasi} 
Suppose $H$ is an infinite quasi-convex subgroup of a word hyperbolic group $G$ then $H$ has finite index in its commensurator. 
\end{theorem}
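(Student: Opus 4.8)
The plan is to reduce the statement to a fact about the limit set of $H$ in the Gromov boundary $\partial G$, and then to count cosets using the cocompact action of $H$ on the convex hull of that limit set. Since $H$ is infinite and quasi-convex, it is itself word hyperbolic and has a well-defined, nonempty, closed limit set $\Lambda H \subseteq \partial G$; moreover quasi-convexity of $H$ is equivalent to $H$ acting cocompactly on the weak convex hull $Q \equiv \mathrm{hull}(\Lambda H)$. I will use three standard facts about hyperbolic groups: (i) limit sets transform equivariantly, $\Lambda(gHg^{-1}) = g\,\Lambda H$; (ii) a finite-index subgroup has the same limit set as the ambient subgroup, so commensurable infinite quasi-convex subgroups share a limit set; and (iii) the hull construction is $G$-equivariant, $g\cdot \mathrm{hull}(\Lambda) = \mathrm{hull}(g\Lambda)$.

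First I would show that $Comm(H,G)$ is contained in the setwise stabilizer $L \equiv \{g\in G : g\,\Lambda H = \Lambda H\}$. If $g\in Comm(H,G)$, then $gHg^{-1}\cap H$ is infinite (having finite index in the infinite group $H$) and has finite index in both $H$ and $gHg^{-1}$; by fact (ii) its limit set equals both $\Lambda H$ and $\Lambda(gHg^{-1})$, so by fact (i), $g\,\Lambda H = \Lambda(gHg^{-1}) = \Lambda(gHg^{-1}\cap H) = \Lambda H$, i.e. $g\in L$. Since $H \le N(H,G) \le Comm(H,G) \le L$, it then suffices to prove $[L:H] < \infty$.

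To bound $[L:H]$, note that by fact (iii) every $g\in L$ satisfies $gQ = \mathrm{hull}(g\,\Lambda H) = \mathrm{hull}(\Lambda H) = Q$, so $L$ preserves $Q$. Fix a basepoint $o \in Q$. Because $H$ acts cocompactly on $Q$, the orbit $H\cdot o$ is $R$-dense in $Q$ for some $R$. Given $g\in L$ we have $g\cdot o \in Q$, so there is $h\in H$ with $d(h\cdot o, g\cdot o)\le R$, whence $h^{-1}g$ carries $o$ into the ball $B(o,R)$. Since $G$ acts freely and properly on the vertices of its Cayley graph, only finitely many group elements move $o$ into $B(o,R)$; call this finite set $F$. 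Then $h^{-1}g \in F$, so $L \subseteq H\cdot F$ is a union of finitely many right cosets of $H$, giving $[L:H] \le |F| < \infty$ and hence $[Comm(H,G):H] < \infty$.

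The main obstacle is making these geometric reductions precise across both regimes: when $H$ is nonelementary, $\Lambda H$ is a perfect (Cantor) set with a genuinely fat hull, whereas when $H$ is two-ended, $\Lambda H$ is a pair of points and $Q$ degenerates to a quasi-line, so I must confirm that the equivalence ``quasi-convex $\Leftrightarrow$ cocompact on the hull'' and the equivariance statements hold uniformly in both cases. The most delicate step is fact (ii): one must verify that a finite-index (hence commensurable) infinite subgroup of a quasi-convex subgroup really has the identical limit set, which follows from the relevant orbits lying within bounded Hausdorff distance — this is precisely the geometric content captured by Corollary \ref{C8}. Everything else is routine bookkeeping with the proper cocompact action on the Cayley graph.
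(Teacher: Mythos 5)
Your argument is correct, but it follows a genuinely different route from the paper's. The paper works directly in the Cayley graph: for $a\in Comm(H,G)$, Corollary \ref{C8} gives finite Hausdorff distance $D(H,aH)$, and a thin-triangle and quadrilateral argument applied to a bi-infinite geodesic in the $H$-generators through $a$ shows that $a$ lies within a distance of $H$ bounded by a constant depending only on the hyperbolicity and quasi-convexity constants; thus $Comm(H,G)$ lies in a uniform neighborhood of $H$, and local finiteness of the Cayley graph yields finite index. You instead pass to the boundary: $Comm(H,G)$ is contained in the setwise stabilizer $L$ of the limit set of $H$ (by equivariance of limit sets and the fact that a finite-index subgroup of an infinite subgroup has the same limit set), and $[L:H]<\infty$ follows from cocompactness of the $H$-action on the weak hull together with properness of the $G$-action. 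Your route actually proves more: since $H$ and, for any $g\in L$, also $gHg^{-1}$ have finite index in $L$, they are commensurable, so in fact $Comm(H,G)=L$, identifying the commensurator as the limit-set stabilizer --- a structural statement the paper's proof does not give. The price is that the essential geometric work is delegated to the standard but nontrivial fact that an infinite quasi-convex subgroup acts cocompactly on the weak hull of its limit set (Kapovich--Short, Swenson), valid uniformly in the two-ended and non-elementary cases as you note; proving that fact is essentially the same thin-triangle/Arzel\`a--Ascoli argument the paper runs directly, so your proof is better packaged rather than more elementary. Two minor points: your fact (ii) needs no quasi-convexity at all (finite index gives finite Hausdorff distance of orbits, hence equal limit sets --- precisely the content of Corollary \ref{C8}, as you observe), and in the final counting step properness of the action alone suffices; freeness on vertices is irrelevant if your basepoint $o$ is not a vertex.
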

\begin{proof} 
(Outline) Let $a$ be an element of $Q\equiv Comm(H,G)$. It suffices to bound the distance from $a$ to $H$ in $\Gamma$, a Cayley graph of $G$ with respect to some finite generating set (containing a set of generators for $H$). Let $\alpha\equiv (\ldots , h_{-1} h_0, h_1, \ldots )$ be a bi-infinite geodesic in the generators of $H$ (so that $\alpha$ is quasi-geodesic in $\Gamma$). Assume the initial vertex of $h_0$ is $a\equiv x_0$ and the initial point of $h_n$ is $x_n$. Choose $N$ large, with respect to the Hausdorff distance $D\equiv D(H,aH)$ in $\Gamma$. Let $x$ be a point of  $H$ within $D$ of $x_N$. Consider the geodesic rectangle $([1,a], [a,x_N], [x_N,x], [x,1])$. By thin geodesic triangles, some $x_i$ (for $1\leq i\leq N$) is within $D_1 $ of $x_i'\in H$ (where $D_1$ only depends on $\delta $, the thin triangle constant and the quasi-convexity constants for $\alpha$ and $H$). Similarly there is a $j$ such that $-N\leq j\leq -1$ such that $x_j$ is within $D_1$ of $x_j'\in H$. 

The geodesic quadrilateral $([x_j,x_i],[x_i,x_i'],[x_i',x_j'],[x_j',x_j])$ has (opposite) sides of length $\leq D_1$, implying each point of $[x_j,x_i]$ is close to each point of $[x_j',x_i']$. As $a$ is close to $[x_j,x_i]$, $a$ is close to $[x_j',x_i']$ and so $a$ is close to a point of $H$. 
\end{proof}

\begin{theorem}\label{limit} 
The limit set of an infinite quasi-normal subgroup of a word hyperbolic group $H$ is the entire boundary of $H$. 
\end{theorem}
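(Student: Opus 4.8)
The plan is to show that the limit set $\Lambda(Q)$ of the infinite quasi-normal subgroup $Q$ is a nonempty, closed, $H$-invariant subset of $\partial H$, and then to invoke the minimality of the $H$-action on its boundary. Recall that $\Lambda(Q)$ is the set of accumulation points in $\partial H$ of an orbit $Q\cdot x_0$ (independent of the basepoint $x_0$), equivalently the intersection of $\partial H$ with the closure of $Q$ in $H\cup\partial H$. Since $Q$ is infinite and $H$ is proper and hyperbolic, $\Lambda(Q)$ is nonempty and closed. If $H$ is elementary (virtually cyclic) then $\partial H$ has two points and every infinite subgroup has finite index, so there is nothing to prove; I therefore assume $H$ is non-elementary, where the $H$-action on $\partial H$ is minimal, i.e. every nonempty closed $H$-invariant subset equals $\partial H$. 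The whole argument is then the quasi-normal analog of the classical fact for normal subgroups, with commensurability standing in for equality of conjugates.

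The key step is to establish that $\Lambda(Q)$ is $H$-invariant, and this is precisely where quasi-normality enters. First I would record that commensurable subgroups have equal limit sets. For this it suffices to show that if $K$ has finite index in $Q$ then $\Lambda(K)=\Lambda(Q)$. The inclusion $\Lambda(K)\subseteq\Lambda(Q)$ is immediate. For the reverse, write $Q=\bigcup_{i=1}^n Kq_i$ and let $\xi\in\Lambda(Q)$, witnessed by a sequence $q_j\in Q$ with $q_jx_0\to\xi$. Infinitely many $q_j$ lie in a single coset $Kq_i$, so along a subsequence $q_j=k_jq_i$ with $k_j\in K$; setting $y_0=q_ix_0$ we get $k_jy_0\to\xi$, whence $\xi\in\Lambda(K)$ since the limit set is independent of the basepoint. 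Consequently, if $Q_1$ and $Q_2$ are commensurable then $\Lambda(Q_1)=\Lambda(Q_1\cap Q_2)=\Lambda(Q_2)$.

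Now fix $g\in H$. Because $Q$ is quasi-normal, $g\in Comm(Q,H)$, so $gQg^{-1}\cap Q$ has finite index in both $Q$ and $gQg^{-1}$; that is, $Q$ and $gQg^{-1}$ are commensurable. Combining the equivariance $\Lambda(gQg^{-1})=g\Lambda(Q)$ with the commensurability fact just proved, I obtain
$$g\Lambda(Q)=\Lambda(gQg^{-1})=\Lambda(gQg^{-1}\cap Q)=\Lambda(Q).$$
As $g$ was arbitrary, $\Lambda(Q)$ is $H$-invariant. Being also nonempty and closed, minimality of the $H$-action on $\partial H$ forces $\Lambda(Q)=\partial H$, which completes the argument.

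The substance of the proof is entirely the reduction to the commensurable-limit-set fact: once one knows that commensurable subgroups share a limit set, the argument is a verbatim copy of the normal-subgroup case, since quasi-normality gives exactly commensurability of $Q$ with all its conjugates. The hard part is therefore not a single calculation but the careful assembly of the standard boundary theory—nonemptiness and closedness of the limit set of an infinite subgroup, the equivariance $\Lambda(gQg^{-1})=g\Lambda(Q)$, invariance of the limit set under passage to a finite-index subgroup, and minimality of the boundary action for non-elementary hyperbolic groups. Each of these is routine in the theory of hyperbolic groups and can be cited, but the finite-index invariance (sketched above) is the one ingredient worth spelling out, as it is what upgrades the weak commensuration hypothesis of quasi-normality into genuine $H$-invariance of $\Lambda(Q)$.
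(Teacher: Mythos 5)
Your proof is correct, but it takes a genuinely different route from the paper's. Both arguments rest on showing that $\partial Q$ is nonempty, closed and $H$-invariant, yet each half is executed differently. For invariance, the paper appeals to its own geometric machinery: by Corollary \ref{C8}, quasi-normality means the Hausdorff distance $D(Q,hQ)$ is finite for every $h\in H$, whence $\partial (hQ)=\partial Q$, i.e.\ $h\,\partial Q=\partial Q$ (this is the step labelled $(\ast)$ in the paper's proof). You instead work from the algebraic definition: conjugates of $Q$ are commensurable with $Q$, and commensurable subgroups have equal limit sets, which you correctly reduce to the finite-index case. For the conclusion, the paper does not cite minimality of the boundary action as a black box; it reproves the needed density by hand, extracting an infinite-order element $a\in Q$ (using that a word hyperbolic group has only finitely many conjugacy classes of finite subgroups), observing that its quasi-geodesic axis $\alpha$ lies within bounded distance of $Q$, and then using thin ideal triangles to show that translates $g_i\alpha$, whose endpoints lie in $\partial (g_iQ)=\partial Q$, accumulate at an arbitrary boundary point. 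Your appeal to minimality for non-elementary $H$ is a legitimate shortcut that makes the argument shorter and more modular, at the price of invoking a deeper standard theorem and of the separate elementary/non-elementary case split, which the paper's direct construction renders unnecessary (its argument goes through verbatim when $\partial H$ has two points). What your route buys is a clean isolation of exactly where quasi-normality enters, namely commensurability of $Q$ with its conjugates; what the paper's route buys is self-containedness and a further illustration of the Hausdorff-distance viewpoint of Corollary \ref{C8} that the paper develops and promotes throughout.
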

\begin{proof}
(This proof is basically the same as the standard one for normal subgroups.) Let $Q$ be an infinite quasi-normal subgroup of a word hyperbolic group $H$. Let $\Gamma$ be a Cayley graph for $H$ (on a finite generating set). 

$(\ast)$ By the definition of quasi-normal, the limit set of $\partial Q=\partial (hQ)$ in $\Gamma$ for all $h\in H$. 

As word hyperbolic groups have only finitely many conjugacy classes of finite subgroups, $Q$ contains an element $a$, of infinite order. Let $a^{\pm \infty}=\partial \langle a\rangle$ in $\Gamma$. Let $\alpha=(\ldots , a_{-1},a_0,a_1,\ldots )$ be a bi-infinite geodesic edge path in $\Gamma$ with $\partial (\alpha)=a^{\pm \infty}$. As elements of infinite order determine quasi-geodesics in $\Gamma$, $\alpha$ is of bounded distance $D$ from $Q\subset \Gamma$. In particular, $a^{\pm\infty}\in \partial Q$.

Let $x_0$ be the initial vertex of the edge $a_0$, $\beta=(b_1,b_2,\ldots )$ a geodesic edge path in $\Gamma$ beginning at $x_0$,  $b^{\infty}$ the boundary point of $\beta$, $y_i$ the initial point of $b_i$, and let $h_i\in H$ be the group element such that $g_ix_0=b_i$. Consider the ideal triangle with sides $g_i\alpha$, $[x_0,g_i(a^\infty))$ and $[x_0,g_i(a^{-\infty}))$. Since $b_i$ is a vertex of $g(\alpha)$, one of the two sides of the ideal triangle, $[x_0,g_i(a^\infty))$ or $[x_0,g_i(a^{-\infty}))$, 
passes within $\delta$ (the hyperbolic constant for thin triangles in 
$\Gamma$) of $b_i$. Hence $b^{\infty}$ is a limit point of the boundary points of 
$\{g_i(\alpha)\}_{i=1}^\infty$. As $g_i(\alpha)$ is within $D$ of $g_i(Q)$,  $\partial (g_i(\alpha))\subset \partial (g_iQ)=\partial Q$ (see $(\ast)$). Hence $b\in \partial Q$. As $\beta$ was arbitrary $\partial Q=\partial H$. 
\end{proof}

\section{Characterizations of quasi-normal subgroups of finitely generated groups}

In this section we produce two characterizations of quasi-normal subgroups of finitely generated groups that connect the theory to both well developed and emerging ideas in group theory.

\begin{lemma} \label{estimate}
Suppose $Q$ is a subgroup of the finitely generated group $G$.  Fix a finite generating set, $S$, for $G$, and let $|\cdot|$ be the corresponding word-length norm on $G$, let $d$ be the induced left invariant word metric on $G$ where~$d(a,b)=|b^{-1}a|$, and $D$ be the corresponding Hausdorff metric on subsets of $G$.\\

\noindent Suppose $Q$ is quasi-normal in $G$. Let $k = \underset{s \in S}{\max}(D(s Q,Q))+1.$  Then for all $a,b \in G$ we have the following:
\begin{enumerate}
\item \label{left} $D(bQ,Q) \leq k|b|$
\item \label{conj} $D(bQb^{-1},Q) \leq (k+1)|b|$
\item $D(QbQ,Q) \leq k|b|$.
\item $D(aQbQ,abQ)=D(QbQ,bQ) \leq 2k|b|$\\

\end{enumerate}
\end{lemma}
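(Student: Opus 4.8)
The plan is to prove the two assertions separately, obtaining the stated equality from left-invariance of the metric and the numerical bound from the triangle inequality applied to the earlier parts of the lemma.

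First I would record that left-translation is an isometry of $(G,d)$: for any $a\in G$ and $x,y\in G$ we have $d(ax,ay)=|y^{-1}a^{-1}ax|=|y^{-1}x|=d(x,y)$, so $x\mapsto ax$ is a bijective isometry of $(G,d)$ and therefore preserves Hausdorff distance between subsets. Since $aQbQ=a(QbQ)$ and $abQ=a(bQ)$, applying this with the two subsets $QbQ$ and $bQ$ yields $D(aQbQ,abQ)=D\bigl(a(QbQ),a(bQ)\bigr)=D(QbQ,bQ)$, which is the asserted equality; in particular the quantity is independent of $a$.

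For the inequality I would invoke the triangle inequality for the Hausdorff pseudometric $D$ with the intermediate set $Q$, namely $D(QbQ,bQ)\le D(QbQ,Q)+D(Q,bQ)$. The first term is $\le k|b|$ by the third part of this lemma, and the second equals $D(bQ,Q)\le k|b|$ by the symmetry of $D$ together with the first part; adding these gives $D(QbQ,bQ)\le 2k|b|$, as required.

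The only point requiring care is justifying the triangle inequality for $D$, since $QbQ$, $bQ$ and $Q$ are all infinite subsets and $D$ a priori takes values in $[0,\infty]$. This is routine: if $X$ lies in the $r$-neighborhood of $Y$ and $Y$ lies in the $s$-neighborhood of $Z$, then $X$ lies in the $(r+s)$-neighborhood of $Z$, and symmetrically, so $D(X,Z)\le D(X,Y)+D(Y,Z)$ whenever the right-hand side is finite. Finiteness of $D(QbQ,Q)$ and $D(Q,bQ)$ is precisely what the first and third parts supply (using that $Q$ is quasi-normal in $G$), so there is no substantive obstacle remaining; the real work has already been done in those parts, and part~4 is essentially a bookkeeping consequence of left-invariance and the triangle inequality.
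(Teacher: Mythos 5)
Your proposal proves only item 4 of the four-item lemma. Your argument for that item is correct and is essentially the paper's own: the equality comes from left invariance (left translation by $a$ is an isometry of $(G,d)$, hence preserves Hausdorff distance and carries $QbQ,\,bQ$ to $aQbQ,\,abQ$), and the bound comes from the triangle inequality $D(QbQ,bQ)\le D(QbQ,Q)+D(Q,bQ)\le 2k|b|$. But the statement to be proved is the whole lemma, and you treat items 1 and 3 as already available (``by the third part of this lemma,'' ``by the first part'') without ever establishing them; item 2 is not addressed at all. This is a genuine gap rather than bookkeeping, because items 1 and 3 are precisely where quasi-normality and finiteness of $S$ do their work --- indeed the finiteness of $k=\max_{s\in S}D(sQ,Q)+1$ itself rests on corollary \ref{C8}, which says $D(gQ,Q)<\infty$ exactly when $g\in Comm(Q,G)$.

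What is missing is short but essential. For item 1, write $b=b_1\cdots b_n$ as a geodesic word in $S$ and telescope:
$$D(bQ,Q)\le\sum_{i=1}^{n}D(b_1\cdots b_iQ,\ b_1\cdots b_{i-1}Q)=\sum_{i=1}^{n}D(b_iQ,Q)\le k|b|,$$
where the middle equality is left invariance applied to each term. Item 2 then follows from $D(bQb^{-1},Q)\le D(bQb^{-1},bQ)+D(bQ,Q)\le |b|+k|b|$. Item 3 needs a two-sided estimate: for $q,q'\in Q$, left invariance gives $d(q'bq,Q)=d(bq,Q)\le D(bQ,Q)\le k|b|$ (every point of $QbQ$ is close to $Q$), and conversely $d(QbQ,q)=d(QbQ,1)\le d(QbQ,b)+d(b,1)=|b|\le k|b|$ (every point of $Q$ is close to $QbQ$), using $q^{-1}QbQ=QbQ$. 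Once these are in place, your treatment of item 4 finishes the proof exactly as the paper does.
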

\begin{proof}
Let $b=b_1b_2\cdots b_n$ where each $b_i \in S$.  Then 
$$D(bQ,Q) \leq D(bQ,b_1\cdots b_{n-1}Q)+D(b_1\cdots b_{n-1}Q,b_1\cdots b_{n-2}Q)+\cdots+D(b_1Q,Q)$$
$$=D(b_nQ,Q)+D(b_{n-1}Q,Q)+\cdots +D(b_1Q,Q)\leq k |b|$$ 
by left invariance.  Then 
$$D(bQb^{-1},Q)\leq D(bQb^{-1},bQ) + D(bQ,Q) \leq |b|+ k|b|=(k+1)|b|.$$

Also, for any $q,q' \in Q$, 
$$d(q'bq,Q) = d(bq,Q) \leq D(bQ,Q) \leq k|b|\hbox{ and }$$
$$d(QbQ,q)=d(QbQ,1) \leq d(QbQ,b) + d(b,1)=0+|b|\leq k|b|.$$

\noindent Next, 
$$D(aQbQ,abQ)=D(QbQ,bQ) \leq D(QbQ,Q) +D(Q,bQ) \leq k|b| + k|b| = 2k|b|.$$
\end{proof}

If $f:G\to A$ is a function from a group to a set then define the {\it invariant set} of $f$ to be:Ê
$$I(f)=\{x \in G: f(gx)=f(g) \hbox{ for all } g \in G\}.$$ Ê
If $A$ is also a group, and $f$ is a homomorphism, then the invariant set of $f$ is $ker(f)$. 
It is easy to check that the invariant set for any function is a group:

If $x, y \in I(f)$ then $f(gyx)=f(gy)=f(g)$ (so $xy\in I(f)$), and $f(gx^{-1})=f(gx^{-1}x)=f(g)$ for all $g \in G$ (so $x^{-1}\in I(f)$).  

A function $\phi :G\to L$  from a group to a set is defined to be a {\it quasi-homomorphism} if there is an integer $k$ such that for all $a,b\in G$, 
$$D(\phi^{-1}(\phi(a))\cdot \phi^{-1}(\phi(b)), \phi^{-1}(\phi(ab)))\leq 2k|b|,$$ 
Define $\ker(\phi) \equiv \phi^{-1}(\phi(1_G))$.

\begin{theorem}\label{ker} 
A subset $Q$ of a finitely generated group $G$ is a quasi-normal subgroup of $G$ iff there is a set $L$ and $Q$ is the \emph{kernel} of a quasi-homomorphism $\phi :G\to L$ iff $Q$ is the invariant set of $\phi$. 
\end{theorem}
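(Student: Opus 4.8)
The plan is to prove all three conditions equivalent by exhibiting one explicit quasi-homomorphism for the forward direction and extracting one structural fact about \emph{arbitrary} quasi-homomorphisms for the converse. Throughout I write $P_a\equiv\phi^{-1}(\phi(a))$ for the fiber of $\phi$ through $a$, so the defining inequality reads $D(P_aP_b,P_{ab})\le 2k|b|$ and $\ker(\phi)=P_1$. The two halves are: (a) quasi-normal $\Rightarrow$ the left-coset map is a quasi-homomorphism whose kernel and invariant set are both $Q$; and (b) any quasi-homomorphism realizing $Q$ as kernel or invariant set forces $Q$ to be quasi-normal.

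For the forward implication, assume $Q$ is quasi-normal and set $L=G/Q$ (the set of left cosets) with $\phi(g)=gQ$. Then each fiber is a single coset, $P_a=aQ$, so $\ker(\phi)=P_1=Q$; and $x\in I(\phi)$ iff $gxQ=gQ$ for all $g$ iff $xQ=Q$ iff $x\in Q$, so $I(\phi)=Q$ as well. The quasi-homomorphism inequality becomes $D(aQ\cdot bQ,abQ)\le 2k|b|$, which is precisely conclusion (4) of Lemma~\ref{estimate} (with its constant $k$), available because $Q$ is quasi-normal. Thus a single $\phi$ witnesses $Q=\ker(\phi)=I(\phi)$, giving (a)$\Rightarrow$(b) and (a)$\Rightarrow$(c) at once.

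The crux of the converse is the identity $\ker(\phi)=I(\phi)$ for \emph{every} quasi-homomorphism, which simultaneously shows $\ker(\phi)$ is a subgroup (since $I(\phi)$ always is, by the remark preceding the theorem) and collapses hypotheses (b) and (c) into one. The inclusion $I(\phi)\subseteq\ker(\phi)$ follows by putting $g=1$. For the reverse inclusion I specialize the inequality to $b=1$: since $|1|=0$ the right side vanishes, forcing the \emph{exact} equality $P_a\cdot\ker(\phi)=P_a$ for every $a$, i.e.\ each fiber is closed under right multiplication by $\ker(\phi)$. Hence if $x\in\ker(\phi)$ then for each $g$ one has $gx\in P_g\cdot x\subseteq P_g\ker(\phi)=P_g$, so $\phi(gx)=\phi(g)$ and $x\in I(\phi)$. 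I expect this to be the main obstacle, precisely because a fiber of an arbitrary set-map need not be closed under inverses; the rescue is that the $b=1$ instance of the inequality has vanishing bound, yielding an equality rather than a coarse estimate.

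Finally, with $Q=\ker(\phi)=I(\phi)$ now a genuine subgroup, it suffices by Corollary~\ref{C8} to bound $D(Q,gQg^{-1})<\infty$ for each $g$. From $I(\phi)=Q$ we get $aQ\subseteq P_a$ for all $a$. Evaluating the inequality at $(a,b)=(g,g^{-1})$ and using $gQ\subseteq P_g$, $g^{-1}\in P_{g^{-1}}$, $P_1=Q$ gives $gQg^{-1}\subseteq P_gP_{g^{-1}}$, so every element of $gQg^{-1}$ lies within $2k|g|$ of $Q$. Evaluating at $(g^{-1},g)$ gives $g^{-1}Qg$ inside the $2k|g|$-neighborhood of $Q$; conjugating by $g$ (which distorts the word metric by at most $2|g|$) shows every element of $Q$ lies within $2k|g|+2|g|$ of $gQg^{-1}$. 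These two one-sided containments—harvested by playing $(g,g^{-1})$ against $(g^{-1},g)$—bound $D(Q,gQg^{-1})<\infty$, so $g\in Comm(Q,G)$ by Corollary~\ref{C8}; as $g$ was arbitrary, $Q$ is quasi-normal.
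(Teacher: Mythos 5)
Your proposal is correct and follows essentially the same route as the paper: the forward direction uses the left-coset map together with Lemma \ref{estimate}(4), and the converse rests on the same two pillars — the $b=1_G$ instance of the inequality forcing the exact fiber identity $\phi^{-1}(\phi(a))\cdot\ker(\phi)=\phi^{-1}(\phi(a))$ (hence $\ker(\phi)=I(\phi)$ is a subgroup), followed by the inequality at inverse pairs and Corollary \ref{C8}. The only cosmetic difference is at the end, where you bound $D(Q,gQg^{-1})$ by playing $(g,g^{-1})$ against $(g^{-1},g)$, while the paper bounds $D(aQ,Q)$ from a single instance plus left invariance; both land on the same corollary.
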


\begin{proof}
Fix a generating set, $S$, for $G$, use the notation of lemma \ref{estimate},
and suppose $Q$ is a quasi-normal subgroup of $G$ and let $L$ be the set of left cosets $\{gQ \mid g \in G\}$
of $Q$ in $G$.  Let $\phi$ be the natural map from $G$ to $L$ taking $g \in G$ to the left coset $gQ$ (so $Q=\ker(\phi)$). Let $k = \underset{s \in S}{\max}(D(s Q,Q))+1$ as in lemma \ref{estimate}. Then $D(\phi^{-1}(\phi(a))\cdot \phi^{-1}(\phi(b)), \phi^{-1}(\phi(ab))) = d(aQbQ, abQ)\leq 2k|b|$ again by lemma \ref{estimate}.\\

Conversely, suppose  $\phi:G\to L$ is a quasi-homomorphism.
We proceed to show that $\ker(\phi)$ is the invariant set of $\phi$ and a quasi-normal subgroup of $G$.  For convenience, let  $Q\equiv\ker(\phi)$.  Let $g \in G$, then 
$$D(\phi^{-1}(\phi(g))\cdot \phi^{-1}(\phi(1_G)), \phi^{-1}(\phi(g\cdot1_G)))\leq 2k|1_G|=0\hbox{ implying}$$ 

$$(\ast) \ \ \ \phi^{-1}(\phi(g))\cdot Q=\phi^{-1}(\phi(g))\hbox{ for all }g \in G$$



If $q\in Q$ then by $(\ast)$, $gq\in \phi^{-1}(\phi(g))$ for all $g\in G$. Then $\phi(gq)=\phi(g)$ for all $g\in G$, and $Q\subset I(\phi)$.
 
If $y\in I(\phi)$, then $\phi(yx)=\phi(x)$ for all $x \in G$. In particular for $x=1_G$ we have $\phi(y)=\phi(1_G)$ and $y\in Q$. Thus $Q$ is equal to the invariant set of $f$. In particular, $Q$ is a subgroup of $G$.  

Finally, by $(\ast)$ we see that each set  $\phi^{-1}(\phi(a))$ is a union of left cosets of $Q$ and since it contains $a$ it contains $aQ$.

By hypothesis we have 
$$D(\phi^{-1}(\phi(a^{-1}))\cdot \phi^{-1}(\phi(a)),Q)\leq 2k|a|\hbox{ for all }a \in G,\hbox{ and so }$$ 
$$\sup_{q,q'\in Q} d(aqa^{-1}q',Q) \leq 2k|a|.$$ 
For $q'=1_G$ we obtain  $\sup_{q\in Q} d(aqa^{-1},Q) \leq 2k|a|,$ and  
$$\sup_{q\in Q} d(aq,Q) \leq (2k+1)|a|.$$ 
Conversely, left invariance yields 
$$\sup_{q\in Q} d(q,a^{-1}Q) \leq (2k+1)|a^{-1}|\hbox{ for all }a^{-1} \in G.$$  Thus $D(aQ,Q) \leq (2k+1)|a|$ for all  $a \in G$.
\end{proof}

Our second characterization of quasi-normality is based on the following lemma. A converse to lemma \ref{locfin} will be proved in theorem \ref{locfinequiv}.

 \begin{lemma} \label{locfin} 
Suppose $Q$ is a quasi-normal subgroup of a finitely generated group $G$. Let $S$ be a finite generating set for $G$. Then are only finitely many cosets $qsQ$ where $q\in Q$ and $s\in S^{\pm1}$. (Equivalently, there are only finitely many  cosets $gQ$ such that, in the Cayley graph $\Gamma(G,S)$, an edge connects a vertex of $Q$ to a vertex of $gQ$.)
\end{lemma}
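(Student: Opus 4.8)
The plan is to fix a single generator and count cosets. Since $S^{\pm 1}$ is finite, it suffices to show that for each fixed $s\in S^{\pm 1}$ the collection $\{qsQ : q\in Q\}$ is finite; the full collection in the statement is then a finite union of finite sets. So from now on I would fix $s$.

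First I would pin down exactly when two of these cosets coincide. For $q_1,q_2\in Q$ one has $q_1sQ=q_2sQ$ iff $(q_2s)^{-1}(q_1s)\in Q$, i.e. iff $s^{-1}q_2^{-1}q_1s\in Q$, i.e. iff $q_2^{-1}q_1\in sQs^{-1}$. Since $q_2^{-1}q_1\in Q$ holds automatically, this condition reduces to
$$q_2^{-1}q_1\in Q\cap sQs^{-1}.$$
Writing $K_s\equiv Q\cap sQs^{-1}$, the criterion $q_1sQ=q_2sQ$ becomes $q_1\in q_2K_s$, so the equivalence classes are precisely the left cosets of $K_s$ in $Q$. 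Consequently the map $q\mapsto qsQ$ descends to a bijection from the coset space $Q/K_s$ onto $\{qsQ:q\in Q\}$, and the number of distinct cosets of the form $qsQ$ equals the index $[Q:K_s]=[Q:Q\cap sQs^{-1}]$.

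The last step is where quasi-normality enters, and it is immediate. Because $Q$ is quasi-normal we have $Comm(Q,G)=G$, so in particular $s\in Comm(Q,G)$; by the definition of the commensurator this says that $sQs^{-1}\cap Q=K_s$ has finite index in $Q$. Hence $[Q:K_s]<\infty$ for every $s\in S^{\pm1}$, and summing over the finite set $S^{\pm1}$ yields finitely many cosets $qsQ$ in total. For the parenthetical reformulation, note that an edge of $\Gamma(G,S)$ joins a vertex $q$ of $Q$ to a vertex $qs$ for some $s\in S^{\pm1}$, and the coset containing $qs$ is exactly $qsQ$; thus the cosets $gQ$ adjacent to $Q$ in the stated sense are precisely those of the form $qsQ$, so the two formulations agree.

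I do not anticipate a serious obstacle: the whole content is the identification of the coset count with the index $[Q:Q\cap sQs^{-1}]$, after which finiteness is just the definition of commensurator applied to $s$. The only point needing a little care is verifying the coincidence criterion correctly and observing that $q_2^{-1}q_1\in Q$ is automatic, so that the governing condition is membership in the intersection $Q\cap sQs^{-1}$ rather than in $sQs^{-1}$ alone; getting this right is what guarantees the relevant index is taken inside $Q$, where commensurability supplies finiteness.
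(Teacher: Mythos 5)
Your proof is correct, and it takes a genuinely different route from the paper's. You identify, for each fixed $s\in S^{\pm1}$, the set $\{qsQ : q\in Q\}$ with the coset space $Q/(Q\cap sQs^{-1})$ via the coincidence criterion $q_1sQ=q_2sQ$ iff $q_2^{-1}q_1\in Q\cap sQs^{-1}$ (which you verify correctly, including the point that membership in $Q$ is automatic), after which finiteness is literally the definition of $s\in Comm(Q,G)$ — and in fact only the ``finite index in $Q$'' half of that definition is used. This is a direct, purely algebraic count, and it yields the sharper quantitative conclusion that the number of cosets $qsQ$ for fixed $s$ is exactly $[Q:Q\cap sQs^{-1}]$, so the total is at most $\sum_{s\in S^{\pm1}}[Q:Q\cap sQs^{-1}]$. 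The paper instead argues geometrically and by contradiction: given infinitely many distinct cosets joined to $Q$ by an edge, it pigeonholes on edge labels to assume all connecting edges carry the same label, uses the left action of $Q$ on the Cayley graph (which fixes $Q$ setwise and permutes these edges and cosets) to conclude that all these cosets lie at one common Hausdorff distance $K$ from $Q$ — where $K<\infty$ is exactly where quasi-normality enters, via corollary \ref{C8} — and then observes that only finitely many edge paths of length $\leq K$ issue from the identity vertex, forcing two of the supposedly distinct cosets to meet. Your route bypasses the Hausdorff-distance machinery of Section 2 entirely and is shorter and constructive; the paper's route fits its geometric development and exhibits the transport-by-$Q$-action technique that recurs in its asymptotic results, but it is indirect and gives no explicit count.
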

\begin{proof}
Suppose $g_1Q,g_2Q,\ldots $ are distinct cosets in $G$, and $e_i$ is an edge of $\Gamma$ (say with label $t_i$) that begins in $Q$ and ends in  $g_iQ$. Infinitely many of the $t_i$ must be identical and so we may assume all $t_i$ are the same label, call it $t$. Say $q_i\in Q$ is the initial point of $e_i$. Consider the (left) action of $q_jq_i^{-1}\in Q$ on $\Gamma$. This element fixes the set $Q$, takes the edge $e_i$ to $e_j$ and takes the coset $g_iQ$ to the coset $g_jQ$. Hence, if the Hausdorff distance between $Q$ and $g_iQ$ is $K$, then the Hausdorff distance from $Q$ to $g_jQ$ is $K$ for all $j$. For each $j>0$, let $\alpha_j$ be an edge path of length $\leq K$ in $\Gamma$ from $1$ (the identity vertex) to a point of $g_jQ$ . There are only finitely many edge paths at $1$ in $\Gamma$ with length $\leq K$ and so two of these paths must agree. But then two of $g_iQ$ cosets  intersect non-trivially, contrary to our hypothesis. 
\end{proof}

Suppose $G$ is a group with finite generating set $S$ and $H$ is a subgroup of $G$. Let $\Lambda(S,H,G)$ be the graph with vertices the left cosets $gH$ of $G$ and a directed edge (labeled $s$) from $gH$ to $fH$ if for some $s\in S$ and $h_1, h_2\in H$, we have $gh_1sh_2=f$. (Equivalently, in the Cayley graph $\Gamma(S,G)$, there is an edge labeled $s$ with initial point in $gH$ and end point in $fH$.)
The following result is a direct consequence of lemma \ref{locfin}.
\begin{corollary}\label{locfin2} 
Suppose $G$ is a group with finite generating set $S$ and $Q$ is quasi-normal in $G$. Then the graph $\Lambda(S,Q,G)$ is locally finite and $G$ acts  (on the left) transitively on the vertices of $\Lambda(S,Q,G)$ and by isometries (using the edge path metric)  on $\Lambda(S,Q,G)$. The stabilizer of $gQ$ is $gQg^{-1}$ and the quotient map $p:\Gamma(S,G)\to \Lambda(S,Q,G)$ commutes with the left action of $G$. 

\end{corollary}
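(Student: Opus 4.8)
The plan is to treat local finiteness as the only substantive claim and to obtain it by transporting the finiteness of the link of the base vertex $Q$ (which is precisely lemma \ref{locfin}) along a transitive action by graph automorphisms; everything else is formal verification.

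First I would define the left $G$-action on vertices by $x\cdot(gQ)=(xg)Q$ and check it is a genuine action (that the identity acts trivially and that associativity holds is immediate) and transitive, since $g\cdot Q=gQ$ realizes an arbitrary coset. Next I would check the action sends edges to edges: an edge from $gQ$ to $fQ$ is witnessed by an equation $gh_1sh_2=f$ with $h_1,h_2\in Q$ and $s\in S$, and left-multiplying by $x$ gives $(xg)h_1sh_2=xf$, which is exactly a witness for an $s$-labeled edge from $(xg)Q$ to $(xf)Q$. Since $x^{-1}$ does the same, each $x$ acts as a label-preserving graph automorphism, hence as an isometry for the edge-path metric. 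The quotient map is defined by $p(g)=gQ$ on vertices and by sending the $\Gamma$-edge from $g$ to $gs$ (label $s$) to the $\Lambda$-edge from $gQ$ to $gsQ$ (take $h_1=h_2=1$); then $p(xg)=(xg)Q=x\cdot p(g)$, so $p$ commutes with the left action. The stabilizer computation is the one-line chain $x\cdot gQ=gQ\iff xg\in gQ\iff x\in gQg^{-1}$.

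For local finiteness, I would argue at the single vertex $Q$. By lemma \ref{locfin} there are only finitely many cosets $gQ$ joined to $Q$ by an edge of $\Gamma(S,G)$; since $S$ is finite, this bounds both the set of neighbors of $Q$ in $\Lambda$ and the number of labels on edges incident to $Q$, so $Q$ has finite degree. Because $G$ acts transitively and by graph automorphisms, every vertex $gQ=g\cdot Q$ has the same (finite) degree as $Q$, so $\Lambda(S,Q,G)$ is locally finite.

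The main obstacle --- really the only place quasi-normality enters --- is this last transport argument: local finiteness is not a statement about a single vertex but about every vertex, and it is the transitive \emph{automorphism} action, combined with finiteness of the link at $Q$ from lemma \ref{locfin}, that upgrades one to the other. I would therefore be careful to establish that the $G$-action preserves the graph structure (step two) \emph{before} invoking transitivity, since without that the degree at $Q$ gives no information about the degree elsewhere; for an arbitrary (non-quasi-normal) subgroup $H$ the base vertex can have infinite degree and the whole argument collapses already at lemma \ref{locfin}.
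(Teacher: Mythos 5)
Your proof is correct and follows precisely the route the paper intends: the paper offers no written argument, asserting only that the corollary is ``a direct consequence of lemma \ref{locfin},'' and your write-up---finiteness of the link at the base vertex $Q$ via lemma \ref{locfin}, transported to all vertices by the transitive, label-preserving automorphism action, together with the routine verifications of the action, stabilizer, and quotient-map claims---is exactly the natural filling-in of that assertion. In particular, you correctly identify the one point where care is needed (establishing that $G$ acts by graph automorphisms \emph{before} invoking transitivity to spread finite degree from $Q$ to every coset), so there are no gaps.
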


\begin{remark}
Observe that if $Q$ is a quasi-normal subgroup of $G$, $S$ is a finite generating set for $G$ and $v$ is a vertex of $\Lambda(S,Q,G)$ then there may be many edges emanating from $v$ with label $s\in S$. 
\end{remark}

H. Hopf \cite {H} and H. Freudenthal \cite {F} developed the theory of the number of ends of a finitely generated group. In particular if $G$ is a group with finite generating set $S$, then the Cayley graph $\Gamma(G,S)$ has either 0, 1, 2 or an uncountable number of ends. R. Geoghegan's book \cite {Ge} gives a complete analysis of the proper homotopy theory of ends of groups and it is our standard reference for this subject.

For $n\in\{1,2,\ldots \}$,  a connected locally finite CW-complex has $n$-{\it ends} if there is a compact set $C\subset X$ such that $X-C$ has $n$ unbounded components, and there is no compact set $D\subset X$ such that $X-D$ has more than $n$ unbounded components. We say $X$ has an {\it infinite number of ends} if for any $n\in \{1,2,\ldots\}$ there is a compact $C\subset X$ such that $X-C$ has at least $n$ unbounded components. 
A continuous function $f:X\to Y$ is {\it proper} if for each compact set $C\subset Y$, $f^{-1}(C)$ is compact in $X$.  
For a connected CW-complex $X$, the {\it set of ends} of $X$ is the set of equivalence classes $[r]$ where $r:[0,\infty)\to X$ is an (infinite) proper edge path in $X$. Proper edge paths $r$ and $s$ are equivalent (or {\it converge to the same end})  if for any compact set $C$ of $X$ there is an edge path in $X-C$ from the image of $r$ to the image of $s$. It is straightforward to show that the number of ends of $X$ agrees with the cardinality of the set of ends of $X$. If $S$ is a finite generating set for the group $G$ and $N$ is a normal subgroup of $G$, then the number of ends of the group $G/N$ is the same as the number of ends of $\Gamma(S,G)/N$ and hence is 0, 1, 2 or uncountable. This need not be the case for a quasi-normal subgroup $Q$ of a finitely generated group $G$ (see example \ref {E4}). Instead, the graph $\Lambda(S,Q,G)$ seems a more appropriate  object of analysis than $Q\backslash\Gamma(S,G)$. This line of reasoning is verified in our paper \cite {CM} where the graph $\Lambda(S,Q,G)$ is fundamental in developing the results of that paper.  
 
\begin{theorem}\label{Qends}
If $G$ is a group with finite generating set $S$, and $Q$ is a quasi-normal subgroup of $G$ then $\Lambda(S,Q,G)$ has $0$, $1$, $2$ or an uncountable number of ends, and this number is independent of the finite generating set $S$. 
\end{theorem}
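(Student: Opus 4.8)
The plan is to reduce the statement to a graph-theoretic fact and then run a Hopf-style doubling argument. By Corollary~\ref{locfin2} the graph $\Lambda := \Lambda(S,Q,G)$ is connected, locally finite, and carries a vertex-transitive action of $G$ by isometries (indeed by graph automorphisms, since left multiplication preserves the edge relation). Thus it suffices to prove that any connected, locally finite, vertex-transitive graph $X$ has $0$, $1$, $2$, or uncountably many ends. I would work with the description of ends as compatible choices of unbounded component of $X \setminus C$ as $C$ ranges over finite subcomplexes; the text notes this agrees with the proper-ray description. If $X$ is finite it has $0$ ends, so assume $X$ is infinite, fix a base vertex $v_0$, and write $B_R$ for the (finite) ball of radius $R$ about $v_0$. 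Note that if the number of ends is a finite $k$, then for $R$ large the set $F := B_R$ realizes it, i.e. $X \setminus F$ has exactly $k$ unbounded components (enlarging a separating set to a ball can only keep the count at the maximum $k$).

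The heart of the argument is a doubling estimate ruling out a finite number of ends exceeding $2$. Suppose the number of ends is $k$ with $3 \le k < \infty$, and take $F := B_R$ with $X \setminus F$ having exactly $k$ unbounded components $C_1,\dots,C_k$. Picking a vertex $w$ deep inside $C_1$ and $g \in G$ with $g v_0 = w$, vertex-transitivity makes $gF = B_R(w)$ a translate of $F$ lying entirely inside $C_1$ and disjoint from $F$ once $d(v_0,w)$ exceeds $2R$. Since $g$ is an isometry, $X \setminus gF$ again has exactly $k$ unbounded components; the one containing $F$ also contains all of $C_2,\dots,C_k$, so at least $k-1 \ge 2$ of them lie inside $C_1$ and avoid $F$. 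Counting the unbounded components of $X \setminus (F \cup gF)$ then produces the $k-1$ sets $C_2,\dots,C_k$ together with these $\ge k-1$ sets inside $C_1$, i.e. at least $2k-2 > k$ unbounded components, contradicting that $X$ has exactly $k$ ends. Hence a finite number of ends lies in $\{0,1,2\}$.

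To promote ``infinitely many ends'' to ``uncountably many'', I would iterate this construction into a branching tree. If $X$ has more than two ends, the previous step shows some $F = B_R$ separates off $k \ge 3$ unbounded components, and inside any prescribed unbounded region a suitably deep translate of $F$ separates off at least $k-1 \ge 2$ further disjoint unbounded sub-regions contained in that region. Recursing, with the separators chosen far enough apart to remain mutually disjoint, yields an embedded rooted tree of branching at least $2$ at every node, each node carrying an unbounded region and each edge recording a finite separating set. Every infinite descending branch determines a nested sequence of unbounded regions and hence a proper ray, i.e. an end; two branches diverging at a node give rays separated by a finite set, so the assignment is injective. A rooted tree of uniform branching $\ge 2$ has $2^{\aleph_0}$ infinite branches, so the number of ends is uncountable whenever it exceeds $2$, giving the conclusion $0$, $1$, $2$, or uncountable.

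For independence of the generating set, observe that for a second finite generating set $S'$ the graph $\Lambda(S',Q,G)$ has the same vertex set---the left cosets of $Q$---and the identity on vertices is a quasi-isometry $\Lambda(S,Q,G) \to \Lambda(S',Q,G)$: each $s \in S$ is a bounded-length $S'$-word and conversely, so an edge of one graph corresponds to a bounded path in the other (the coset-graph reflection of the Hausdorff bounds in Lemma~\ref{estimate}). Since the number of ends of a connected locally finite graph is a quasi-isometry invariant (standard end theory, cf.\ \cite{Ge}), the count agrees for $S$ and $S'$. I expect the main obstacle to be the bookkeeping in the doubling and tree steps---verifying that each translated separator genuinely lands inside the chosen component, that the newly created regions are unbounded and disjoint from all previously chosen regions, and that divergent branches yield inequivalent proper rays---rather than any conceptual difficulty; the crisp inequality $2k-2 > k$ is the mechanism driving everything.
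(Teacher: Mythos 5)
Your proposal is correct and takes essentially the same approach as the paper: both reduce to the connected, locally finite, vertex-transitive coset graph via Corollary~\ref{locfin2} and run Hopf's translation-doubling argument (translate a finite separating set deep into an unbounded component and count), and both establish independence of the generating set via the identity map on vertices, sending each edge of $\Lambda(S,Q,G)$ to a bounded-length path in $\Lambda(T,Q,G)$. Your write-up merely fills in details the paper leaves as ``standard'' (the $2k-2>k$ contradiction and the branching-tree construction of uncountably many ends).
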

\begin{proof}
As $G$ acts transitively on the vertices of $(S,Q,G)$  and by isometries on $\Lambda(S,Q,G)$ the standard proof that a Cayley graph of a group has $0$, $1$, $2$ or an uncountable number of ends can be modified to show that $\Lambda(S,Q,G)$ has $0$, $1$, $2$ or an uncountable number of ends. I.e. if $\Lambda(S,Q,G)$ has at least 3 ends, let $K$ be a finite subgraph of $\Lambda(S,Q,G)$ such that $\Lambda(S,Q,G)-K$ has $n\geq 3$ unbounded components. Choose one of the unbounded components $A$ of $\Lambda(S,Q,G)-K$ and an element $g\in G$ so that  $gK\subset A$ and $gK$ is far from $K$. Then $\Lambda(S,Q,G)-(K\cup gK)$ has at least $2(n-1)$ unbounded components. A standard argument continuing this line of reasoning shows $\Lambda(S,Q,G)$ has an uncountable number of ends.  

If $T$ is another finite generating set for $G$, then the graph $\Lambda(T,Q,G)$ has the same set of vertices as does $\Lambda(S,Q,G)$ (the left cosets $gQ$). Let $f_S:\Lambda(S,Q,G)\to \Lambda(T,Q,G)$ be defined as follows: $f_S$ restricted to the vertices of $\Lambda(S,Q,G)$ is the identity. Suppose $s\in S$. Choose a $T$-word $w_s$ such that in $G$, $s=w_s$. If $e$ is any directed edge of $\Lambda(S,Q,G)$ with label $s\in S$ and initial vertex $g_1Q$ and terminal vertex $g_2Q$, let $\tilde e$ be an edge of $\Gamma(S,G)$ with label $s$, initial vertex $v_1\in g_1Q$ and terminal vertex $v_2\in g_2Q$. The edge path $ \tau_{\tilde e}$ at $v_1$ with labeling defined by $w_s$ ends at $v_2$. Define $f_S$ to linearly map $e$ to the edge path $\tau_e\equiv p(\tau_{\tilde e})$ of $\Lambda(T,Q,G)$ (where $p:\Gamma(S,G)\to \Lambda(S,Q,G)$ is the quotient map). Note that $\tau_e$ is an edge path from $g_1Q$ to $g_2Q$.  Both $f_T$ and $f_S$ are proper and the compositions $f_Sf_T$ and $f_Tf_S$ are the identity on vertices so $f_T$ and $f_S$ induce isomorphisms between the set of ends of $\Lambda(S,Q,G)$ and $\Lambda(T,Q,G)$.
\end{proof}

\begin{remark}
Theorem \ref{Qends} suggests that for $G$ a group with finite generating set $S$, and $Q$ a quasi-normal subgroup of $G$ the number of ends of $\Lambda(S,Q,G)$ is an appropriate choice for the {\it number of ends of the pair $(G,H)$}. The standard definition of the number of ends of the pair $(G,Q)$ is the number of ends of $Q\backslash \Gamma(G,Q)$.  Chapter 14 of Geoghegan's book \cite{Ge}, presents a comparison between the standard number of ends of a pair of groups and the number of filtered ends of a pair of group. In a separate paper we show that the number of ends of $\Lambda(S,Q,G)$ is the same as the number of filtered ends of the pair $(G,Q)$ when $Q$ is finitely generated. 
\end{remark}   

Suppose $S$ is a finite generating set for the group $G$ and $Q$ is a quasi-normal subgroup of $G$. Corollary \ref{C9} suggests the graphs  $\Lambda(S,Q,G)$ and $Q\backslash\Gamma(S,G)$ are quasi-isometric. This is not the case as determined by the following example.

\begin{example}\label{E4} 
If $G=\langle t,x:t^{-1}xt=x^2\rangle$ and $Q=\langle x\rangle$, then $\Lambda(\{x,t\},Q,G)$ is  a tri-valent tree. The graph of $Q\backslash\Gamma(S,G)$ is obtained as follows: Begin with a ray, with vertices labeled $v_i$ for $i\leq 0$. Assume the directed edge from $v_{i-1}$ to $v_i$ is labeled $t$. There is a loop labeled $x$ at each $v_i$. Call this graph $A_0$. Attach a directed edge labeled $t$ to $v_0$ with end vertex $v_1$ and a loop of length 2 to $v_1$ with each edge labeled $x$. Let $w_1$ label the vertex of this loop opposite $v_1$. Call this graph $\hat B_1$. Let $\hat B_1' $ be another copy of $\hat B_1$ and attach $\hat B_1$ to $\hat B_1'$ along the respective loops of length 2 with a half twist (so that $v_1$ is identified with $w_1'$ and $w_1$ is identified with $v_1'$). Call the resulting graph $A_1$. Note that $A_1$ has 2-ends. 
 
Next, attach an edge at $v_1$ labeled $t$ with end vertex $v_2$ and attach a loop of length $4$ to $v_2$ such that each edge of the loop is labeled $x$. Let $w_2$ label the vertex of this loop opposite $v_2$. Attach to this graph an edge labeled $t$ beginning  at  $w_1$ and ending at $w_2$.  Call the resulting graph $\hat B_2$.  Let $\hat B_2' $ be another copy of $\hat B_2$ and attach $\hat B_2$ to $\hat B_2'$ along the respective loops of length 4 with a one quarter twist. Call the resulting graph $A_2$. Note that $A_2$ has 4-ends. 

Next, attach an edge at $v_2$ labeled $t$ with end vertex $v_3$ and attach a loop of length $8$ to $v_2$ such that each edge of the loop is labeled $x$. Attach to this graph three additional edges, each labeled $t$ and each beginning  at a vertex of the loop at $v_2$  and ending at a vertex at the loop at $v_3$ so that the relations $t^{-1}xt=x^2$ is satisfied.  Call the resulting graph $\hat B_3$.  Let $\hat B_3' $ be another copy of $\hat B_3$ and attach $\hat B_3$ to $\hat B_3'$ along the respective loops of length 8 with a one eighth twist. Call the resulting graph $A_3$. Note that $A_3$ has 8-ends. 
Continue to construct $Q\backslash\Gamma(S,G)$. 

The number of ends of $Q\backslash\Gamma(S,G)$ is countable, while the number of ends of $\Lambda(\{x,t\},Q,G)$ is uncountable. As the cardinality  of the set of ends of a graph is a quasi-isometry invariant, the graphs $\Lambda(\{x,t\},Q,G)$ and $Q\backslash\Gamma(S,G)$ are not quasi-isometric. $\square$
\end{example}

In \cite{HW},  C. Hruska and D. Wise make the following definition:

\noindent {\bf (Bounded packing)} Let $G$ be a discrete group with a left invariant
metric $d$. Suppose also that $d$ is proper in the sense that every metric ball is finite. A
subgroup $H$ has {\it bounded packing} in $G$ (with respect to $d$) if, for each constant $D$,
there is a number $N=N(G,H,D)$ so that for any collection of $N$ distinct cosets
$gH$ in $G$, at least two are separated by a distance of at least $D$.
(Here $d(g_1H, g_2H)$ is the infimum of $d(g_1h_1, g_2h_2)$ for all $h_1,h_2\in H$.)

The main theorem of \cite {HW} is the following bounded packing result (which is more general and more sophisticated than theorem \ref{quasi}):

\medskip

\noindent {\bf Theorem (Hruska-Wise)} Let $H$ be a relatively quasiconvex subgroup of a relatively hyperbolic
group $G$. Suppose $H \cap gPg^{-1}$ has bounded packing in $gPg^{-1}$ for each conjugate of
each peripheral subgroup $P$. Then $H$ has bounded packing in $G$.

\medskip

Consider the graph  $\Lambda(S,Q,G)$ where $S$ is a finite generating set for the group $G$ and $Q$ is quasi-normal in $G$. For $\Gamma(S,G)$ the Cayley graph of $G$ with respect to $S$, the natural projection map $P:\Gamma\to C$ respects the left action of $G$ on $\Gamma$ and $C$. If $d$ and $D$ are the edge path metrics on $\Gamma$ and $C$ respectively, and $d(g_1Q,g_2Q)$ is the infimum of $d(g_1q_1,g_2q_2)$ for all $q_1,q_2\in Q$, then $D(g_1Q,g_2Q)\leq d(g_1Q,g_2Q)$ for all $g_1,g_2\in G$. As $G$ acts transitively on $C$,  and since $C$ is locally finite (corollary \ref{locfin2}) we have:

\begin{lemma} \label{bdpk} 
If $G$ is finitely generated and $Q$ is quasi-normal in $G$, then $Q$ has bounded packing in $G$.
\end{lemma}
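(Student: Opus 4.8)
The plan is to unpack the definition of bounded packing and reduce it to the local finiteness of $\Lambda(S,Q,G)$, which is guaranteed by corollary \ref{locfin2}. Fix a constant $D$; I need to produce a number $N=N(G,Q,D)$ so that among any $N$ distinct cosets $gQ$, at least two are separated (in the metric $d(g_1Q,g_2Q)=\inf d(g_1q_1,g_2q_2)$) by distance at least $D$. The contrapositive is cleaner: if a collection of distinct cosets is pairwise within distance $D$, then the collection has bounded size. So the real statement to prove is that any ball of radius $D$ in the coset metric $d(\cdot,\cdot)$ contains only boundedly many cosets, with the bound depending only on $G$, $Q$, and $D$.

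First I would translate the coset metric $d$ into the edge-path metric $D$ on $\Lambda(S,Q,G)$. Using the inequality $D(g_1Q,g_2Q)\le d(g_1Q,g_2Q)$ noted just before the lemma, a set of cosets that is pairwise $d$-close is in particular pairwise $D$-close, hence all contained in a single metric ball of radius $2D$ about any one of them in $\Lambda(S,Q,G)$. Since $\Lambda(S,Q,G)$ is locally finite (corollary \ref{locfin2}) and $G$ acts transitively by isometries on its vertices, the number of vertices within distance $2D$ of a fixed vertex is finite and independent of which vertex one centers at. Call this number $N_0=N_0(G,Q,D)$; transitivity is exactly what makes $N_0$ uniform over all choices of basepoint coset.

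Then I would set $N=N_0+1$ and finish by the pigeonhole/contrapositive argument: if one had $N$ distinct cosets pairwise separated by less than $D$ in the metric $d$, all $N$ would lie in a $D$-ball (in $d$) hence in a $2D$-ball (in $D$) around one of them in $\Lambda(S,Q,G)$, but that ball holds at most $N_0<N$ vertices, a contradiction. Therefore among any $N$ distinct cosets at least two must be $d$-separated by at least $D$, which is exactly the bounded packing condition. I should double-check the left-invariant, proper metric hypotheses of Hruska--Wise are met: the word metric $d$ on $G$ is left invariant and proper (balls are finite since $G$ is finitely generated), so $G$ with $d$ is a legitimate instance of their setup.

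The main obstacle, though it is minor, is keeping the two metrics straight and confirming that the relevant ball count in $\Lambda(S,Q,G)$ depends only on the radius and not on the center. This is handled entirely by the isometric transitive action in corollary \ref{locfin2}: local finiteness gives finiteness of each ball, and transitivity gives uniformity of the count, so no quantitative estimate on ball growth is needed. Everything else is a routine application of the inequality between the two metrics and the pigeonhole principle.
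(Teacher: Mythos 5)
Your proposal is correct and follows essentially the same route as the paper: the paper derives the lemma directly from the inequality $D(g_1Q,g_2Q)\leq d(g_1Q,g_2Q)$, the transitive isometric action of $G$ on $\Lambda(S,Q,G)$, and local finiteness of $\Lambda(S,Q,G)$ (corollary \ref{locfin2}), exactly the three ingredients you use. The only difference is that you spell out the pigeonhole/ball-counting step that the paper leaves implicit (and your radius $2D$ could be sharpened to $D$, though this is harmless).
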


For  $F(x,y)$, the free group on $\{x,y\}$, it is elementary to show that $\langle x\rangle $ has bounded packing in $F(x,y)$. Certainly $\langle x\rangle$ is not quasi-normal in $F(x,y)$. 

 If the wording is slightly changed in the bounded packing definition for finitely generated groups, then one gets quasi-normal.

\begin{lemma}\label{bdpk2}
Let $G$ be a finitely generated group with word
metric $d$. A
subgroup $H$ is quasi-normal in $G$ if, for each constant $D$,
there is a number $N=N(G,H,D)$ so that for any collection of $N$ distinct cosets
$gH$ in $G$, at least one is separated from $H$ by a distance of at least $D$.
(Here again,  $d(H, gH)$ is the infimum of $d(h_1, gh_2)$ for all $h_1,h_2\in H$.)
\end{lemma}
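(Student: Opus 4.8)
\section*{Proof proposal for Lemma \ref{bdpk2}}

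The plan is to verify the defining condition $Comm(H,G)=G$ by showing that every generator $s\in S^{\pm1}$ lies in $Comm(H,G)$; since $Comm(H,G)$ is a subgroup of $G$ (Corollary \ref{subgroup}) containing a generating set, this forces $Comm(H,G)=G$ and hence quasi-normality. First I would restate the hypothesis in a usable form. Applying it with $D=2$ produces an integer $N=N(G,H,2)$ so that any $N$ distinct cosets contain one at distance $\geq 2$ from $H$; equivalently, the set $\{gH : d(H,gH)\leq 1\}$ of cosets within distance $1$ of $H$ has at most $N-1$ elements, hence is finite. (This finiteness is essentially local finiteness of $\Lambda(S,H,G)$ at its base vertex, in the spirit of the converse to Lemma \ref{locfin}.) This is the only consequence of the hypothesis the argument uses; note one must take $D=2$ rather than $D=1$ so that cosets at distance exactly $1$ are captured.

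Next I would fix $s\in S^{\pm1}$ and examine the family of cosets $\{hsH : h\in H\}$. Each such coset is close to $H$: taking $h\in H$ and $hs\in hsH$ gives $d(H,hsH)\leq d(h,hs)=|s|=1$, so every coset in this family lies in the finite set identified above. It therefore suffices to count the distinct cosets in the family. A direct computation shows $hsH=h'sH$ if and only if $h'^{-1}h\in H\cap sHs^{-1}$, so the assignment $h\mapsto hsH$ has fibers the left cosets of $H\cap sHs^{-1}$ in $H$, and the number of distinct cosets $hsH$ equals the index $[H:H\cap sHs^{-1}]$. Combining this with the previous paragraph yields $[H:H\cap sHs^{-1}]<\infty$ for every $s\in S^{\pm1}$.

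Finally I would assemble the two finiteness statements needed for commensurability. Fix $s\in S$. From $s$ itself we have $[H:H\cap sHs^{-1}]<\infty$, i.e. finite index in $H$. Applying the same conclusion to $s^{-1}\in S^{\pm1}$ gives $[H:H\cap s^{-1}Hs]<\infty$; conjugating the pair $H\cap s^{-1}Hs\leq H$ by $s$ (an index-preserving isomorphism carrying this pair to $sHs^{-1}\cap H\leq sHs^{-1}$) yields $[sHs^{-1}:sHs^{-1}\cap H]<\infty$, i.e. finite index in $sHs^{-1}$. Thus $sHs^{-1}\cap H$ has finite index in both $H$ and $sHs^{-1}$, so $s\in Comm(H,G)$, and the reduction in the first paragraph completes the proof.

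I expect the main obstacle to be the observation that the hypothesis supplies only \emph{half} of the commensurability condition for each generator (finite index in $H$, but a priori not in $sHs^{-1}$); the key trick is that applying the finiteness statement separately to $s$ and to $s^{-1}$ recovers the missing half after conjugation. The coset-counting identity identifying $\#\{hsH\}$ with $[H:H\cap sHs^{-1}]$ is the other point requiring care, and a sanity check against $\langle x\rangle\leq F(x,y)$ (where the cosets $x^n y\langle x\rangle$ are infinitely many yet all within distance $1$ of $\langle x\rangle$) confirms the hypothesis genuinely fails in the non-quasi-normal case.
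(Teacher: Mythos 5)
Your proof is correct, and although it shares the paper's central counting device, it is packaged in a genuinely different way. The common core is this: the cosets $hgH$ ($h\in H$) all lie within bounded distance of $H$, so the hypothesis makes them finite in number, and since $hgH=h'gH$ iff $h'^{-1}h\in H\cap gHg^{-1}$, that finiteness is exactly $[H:H\cap gHg^{-1}]<\infty$; the paper's partition $H=\bigcup_{i}h_iH_1$ with $H_1=H\cap gHg^{-1}$ is the same fiber count you perform. The divergences are real, though. First, the paper runs the argument for an \emph{arbitrary} $g\in G$ with $D=|g|$, so it needs the hypothesis for every constant $D$, whereas you invoke it only for $D=2$ and only for generators, then propagate using the subgroup property of the commensurator (Corollary \ref{subgroup}); consequently you actually prove the formally stronger statement that finiteness of the set of cosets within distance $1$ of $H$ --- local finiteness of $\Lambda(S,H,G)$ at the base vertex --- already forces quasi-normality, which is precisely the substance of the harder direction of Theorem \ref{locfinequiv} (there deduced from this lemma). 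Second, to obtain the half of commensurability that the counting does not give directly, the paper stays geometric: it bounds both directions of the Hausdorff distance $D(H,gH)$ (running the argument for $g$ and for $g^{-1}$) and concludes via the characterization in Corollary \ref{C8}; you instead conjugate the finite-index statement for $s^{-1}$ by $s$ to get $[sHs^{-1}:sHs^{-1}\cap H]<\infty$ and verify the algebraic definition of $Comm(H,G)$ directly. Each route buys something: the paper's version outputs explicit Hausdorff-distance bounds of the form $D+D_g$, which is the quantitative form later arguments can quote, while yours consumes less of the hypothesis and makes the connection to the local-finiteness characterization of quasi-normality transparent.
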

\begin{proof}
Let $g\in G$ and say $d(1,g)=D$. By hypothesis, there are only finitely many distinct  cosets $g_1H,\ldots, g_NH$ of distance $\leq D$ from $H$. For each $h\in H$, $hgH$ is within $D$ of $H$ and so $hgH=g_iH$ for some $i$. Without loss we assume $g_1=g$. Let $H_i=\{h\in H : hg\in g_iH\}$ so that $\{H_i\}_{i=1} ^n$ partitions $H$ (reindexing, we may assume that $H_i\ne \emptyset$). Note that  $h\in H_1$ iff $hg\in gH$ iff $g^{-1}hg\in H$, so $H_1=H\cap gHg^{-1}$. 

Suppose $h_i\in H_i$. It is straightforward to show that $ h_i^{-1}H_i\subset H_1$ and $h_iH_1\subset H_i$. Hence $H_i=h_iH_1$. Now $H=\cup _{i=1}^Nh_iH_1$ and $H_1 $ has finite index in $H$. This implies that there is an integer $D_g$ such that each point of $H$ is within $D_g$ of $H_1$. As each point of $H_1$ is within $|g|=D$ of $gH$, each point of $H$ is within $D+D_g$ of $gH$. Now, each point of $H$ is within $D+D_{g^{-1}}$ of $g^{-1}H$, so each point of $gH$ is with in $D+D_{g^{-1}}$ of $H$, and $H$ is quasi-normal in $G$.  
\end{proof}

\begin{theorem}\label{locfinequiv}
Suppose $G$ is a group with finite generating set $S$. Then the subgroup $Q$ of $G$ is quasi-normal in $G$ iff the left coset graph $\Lambda(S,Q,G)$ is locally finite.\end{theorem}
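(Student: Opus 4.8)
The forward implication is already in hand: if $Q$ is quasi-normal then Corollary \ref{locfin2} (which rests on Lemma \ref{locfin}) states precisely that $\Lambda(S,Q,G)$ is locally finite. So the plan is to concentrate on the converse, and my strategy is to feed local finiteness into the packing criterion of Lemma \ref{bdpk2}. That lemma says $Q$ is quasi-normal provided that for each constant $D$ there is an $N$ so that among any $N$ distinct cosets $gQ$ at least one satisfies $d(Q,gQ)\geq D$; equivalently, it suffices to show that for each $D$ only finitely many cosets $gQ$ satisfy $d(Q,gQ)\leq D$.

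The bridge between the Cayley graph $\Gamma=\Gamma(S,G)$ and the coset graph $\Lambda=\Lambda(S,Q,G)$ is the projection $p:\Gamma\to\Lambda$ sending a vertex $v$ to the coset $vQ$. Because $p$ carries each edge of $\Gamma$ either to an edge of $\Lambda$ or to a single vertex, it does not increase edge-path length, so the image of any $\Gamma$-path of length $\ell$ is a $\Lambda$-path of length at most $\ell$. Taking infima over the two cosets this yields the key comparison
$$D_\Lambda(Q,gQ)\leq d(Q,gQ)\hbox{ for all }g\in G,$$
where $D_\Lambda$ denotes the edge-path distance in $\Lambda$ and $d(Q,gQ)$ is the infimal $\Gamma$-distance between the cosets. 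Consequently, every coset $gQ$ with $d(Q,gQ)\leq D$ lies in the closed $\Lambda$-ball of radius $D$ about the vertex $Q$.

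First I would note that $\Lambda$ is connected: since $S$ generates $G$, for $g=s_1\cdots s_n$ the successive cosets $Q,\,s_1Q,\,s_1s_2Q,\,\ldots,\,gQ$ are joined by edges of $\Lambda$, so every vertex is reachable from $Q$. A connected locally finite graph has finite balls of every radius, so for each fixed $D$ the $D$-ball about $Q$ in $\Lambda$ contains only finitely many vertices. Combining this with the comparison above, only finitely many cosets $gQ$ satisfy $d(Q,gQ)\leq D$. This is exactly the hypothesis of Lemma \ref{bdpk2} (take $N$ to exceed this finite count), and applying that lemma gives that $Q$ is quasi-normal in $G$, completing the converse.

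The steps here are individually routine; the only point requiring care is the distance comparison $D_\Lambda(Q,gQ)\leq d(Q,gQ)$, which I would justify by projecting a near-optimal $\Gamma$-path between the two cosets and checking that $p$ is length-nonincreasing. The conceptual heart of the argument, rather than any single hard estimate, is the recognition that local finiteness of $\Lambda$ is the same information as finiteness of metric balls of cosets in $G$, which is precisely what the reworded packing criterion of Lemma \ref{bdpk2} needs. I do not anticipate a substantive obstacle beyond keeping the two metrics, and the two senses of ``distance between cosets,'' carefully aligned.
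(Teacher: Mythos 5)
Your proof is correct, and although it uses the same two ingredients as the paper (Corollary \ref{locfin2} for the forward direction, Lemma \ref{bdpk2} for the converse), your logical organization of the converse is genuinely different from --- and sounder than --- the paper's. The paper's converse argument reads: if $\Lambda(S,Q,G)$ is not locally finite, then by vertex-transitivity (and the bound of $2|S|$ directed edges between any two vertices) every vertex has infinitely many neighbors, ``but then lemma \ref{bdpk2} implies $Q$ is not quasi-normal.'' As written, this establishes ``not locally finite $\Rightarrow$ not quasi-normal,'' which is merely the contrapositive of the forward direction already proved; moreover it invokes Lemma \ref{bdpk2} in the direction that lemma does not assert (the lemma states that the packing condition implies quasi-normality, not the converse). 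What the theorem actually requires is the implication ``locally finite $\Rightarrow$ quasi-normal,'' and that is exactly what you prove: connectivity of $\Lambda$ (since $S$ generates $G$) together with local finiteness gives finite balls of every radius about the vertex $Q$; the length-nonincreasing projection $p:\Gamma\to\Lambda$ gives $D_\Lambda(Q,gQ)\leq d(Q,gQ)$; hence for each $D$ only finitely many cosets lie within $\Gamma$-distance $D$ of $Q$, which is precisely the packing hypothesis of Lemma \ref{bdpk2}, and that lemma then delivers quasi-normality. Note also that local finiteness by itself only bounds the number of cosets at distance $1$ (the valence at $Q$); your finite-ball step is what promotes this to every radius $D$, and it is the bridge the paper's proof leaves implicit. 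In short, your write-up is the correct rendering of the argument the paper intends, with the direction of the packing lemma and the radius-$D$ bookkeeping both put right.
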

\begin{proof} 
Lemma \ref{locfin2} implies $C$ is locally finite when $Q$ is quasi-normal in $G$. As $G$ acts transitively on $C$ in any setting, $C$ is not locally finite iff the valence of each vertex is infinite, but then lemma \ref{bdpk2} implies $Q$ is not quasi-normal in $G$. (Note that if $S$ contains $n$ elements, then there at most $2n$ labeled (and directed) edges connecting two given vertices of $C$. In particular, if $C$ is not locally finite, each vertex has infinitely many adjacent vertices.)
\end{proof}

\section{Closing Remarks}

In a separate paper \cite{CM}, we show that certain asymptotic aspects of quasi-normal subgroups are in direct analogy with those for normal subgroups. In particular, we prove the following results. 

\begin{theorem}\label{SS} 
Suppose $G$ is a finitely generated group, and $Q$ is an infinite, finitely generated, quasi-normal subgroup of $G$ of infinite index in $G$, then $G$ is one-ended and semistable at infinity.
\end{theorem}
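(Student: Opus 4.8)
The plan is to treat the projection $p\colon \Gamma(S,G)\to\Lambda(S,Q,G)$ of Corollary \ref{locfin2} as a coarse fibration and to run the arguments behind Mihalik's semistability theorem for extensions $1\to N\to G\to K\to 1$ (with $N,K$ infinite and $N$ finitely generated) in this setting, with the locally finite coset graph $\Lambda(S,Q,G)$ playing the role of the quotient $G/N$ and the cosets $gQ$ playing the role of the fibres (the translates of $N$). First I would enlarge $S$ so that it contains a finite generating set of $Q$; then each coset $gQ$ spans a connected subgraph of $\Gamma$ isometric to $\Gamma(Q,S\cap Q)$, so the fibres are connected and, since $Q$ is infinite, unbounded. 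By Corollary \ref{locfin2} the base $\Lambda$ is locally finite, connected and (since $[G:Q]=\infty$) infinite, and $G$ acts on it transitively by isometries. The essential quantitative input throughout is Lemma \ref{estimate}, whose linear bounds $D(bQ,Q)\le k|b|$ and $D(bQb^{-1},Q)\le(k+1)|b|$ give uniform control of how the fibres sit relative to one another.

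For one-endedness I would rule out $0$, $2$ and (by Hopf's theorem the only remaining case) uncountably many ends. Since $Q$ is infinite, $G$ is infinite, so $\Gamma$ is not $0$-ended; and if $G$ were two-ended it would be virtually $\mathbb Z$, forcing every infinite subgroup to have finite index and contradicting $[G:Q]=\infty$. To exclude uncountably many ends I would show directly that any two points lying outside a large ball $B(1,R)$ can be joined in $\Gamma$ by a path missing $B(1,R')$ for a suitable smaller $R'$: because each fibre $gQ$ is connected and unbounded, one can travel vertically inside a single coset to escape any ball, and because $\Lambda$ is connected one can then travel horizontally by lifting an edge path of $\Lambda$ to $\Gamma$, the lift staying far from $B(1,R')$ by the estimates of Lemma \ref{estimate}. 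This is the one-ended analogue of the filling-in phenomenon that makes a product of two unbounded groups one-ended, and it uses only that the fibres are infinite and connected and that the base is connected.

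The main work is semistability at infinity, and here I would adapt Mihalik's extension argument. Working in a simply connected complex on which $G$ acts cocompactly (see the remark on finite presentation below), semistability of a one-ended space is equivalent to the statement that for every compact set there is a larger compact set such that loops based outside the larger set can be pushed arbitrarily far out while remaining outside the smaller one. I would produce such pushes using the two features of the fibration: the finite generation of $Q$ lets one fill a loop that runs vertically inside a coset $gQ$, since the relators of $Q$ bound these loops, while the infiniteness of $Q$ lets one slide a filled loop outward along the unbounded fibre; horizontal motion is controlled by lifting paths in $\Lambda$ and is kept proper by the linear Hausdorff bounds of Lemma \ref{estimate}. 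Combining the vertical slides with horizontal transport along $\Lambda$ homotopes any loop near infinity out to infinity, yielding semistability exactly as in the normal case.

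The principal obstacle, and the point at which the quasi-normal case genuinely differs from the normal one, is that the fibration is only coarse: conjugation does not preserve the fibre, since $gQg^{-1}$ is merely commensurable with $Q$ rather than equal to it, so there is no honest monodromy and the Hausdorff distances $D(Q,gQ)$ vary with $g$. The content of Lemma \ref{estimate} is precisely that this variation is linearly bounded, and the crux of the proof is to show that the vertical fillings and outward slides can be carried out so that the resulting homotopies are genuinely proper despite the twisting of the fibres; the finite-index data packaged in Corollary \ref{C6} together with the local finiteness of $\Lambda$ (Corollary \ref{locfin2}, Theorem \ref{locfinequiv}) are what prevent the process from collapsing into a compact set. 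A preliminary step I expect to need is to fix a suitable simply connected locally finite $G$-complex: since semistability is a statement about such a complex, one must either verify that $G$ is finitely presented or carry out the homotopies in the complex obtained from $\Gamma$ by adjoining the finitely many relators of $Q$ together with the relators coming from the local structure of $\Lambda$, checking that this does not affect the relevant end invariant, which by Theorem \ref{Qends} is well defined independently of $S$.
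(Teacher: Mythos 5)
You should know at the outset that this paper contains no proof of Theorem \ref{SS} to compare against: Section 5 is an announcement, and the proof is explicitly deferred to the separate paper \cite{CM}. Your decision to organize everything around the projection $p\colon\Gamma(S,G)\to\Lambda(S,Q,G)$ and the linear bounds of Lemma \ref{estimate} is at least consistent with the authors' remark that the coset graph is the fundamental tool of \cite{CM}. Judged on its own, however, your proposal is a plan rather than a proof, and its decisive gap is the point you yourself flag as ``the crux'': showing that the vertical fillings and outward slides can be performed so that the resulting homotopies are proper, even though $gQg^{-1}$ is only commensurable with $Q$ and the fibres twist. That is not a technical detail to be checked after the outline is in place; it is the entire content of the theorem, and the proposal offers no argument for it beyond the expectation that Lemma \ref{estimate} and Corollary \ref{C6} will prevent collapse.

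Two steps would moreover fail as written. First, the homotopy-theoretic setup is incoherent under the stated hypotheses. The theorem deliberately assumes only finite generation of $G$ and of $Q$ (finite presentability is reserved for Theorem \ref{SC}), whereas the normal-subgroup result you are modeling, the main theorem of \cite{M}, is proved for finitely presented $G$, where the universal cover of a finite presentation complex is available. So your first fallback, ``verify that $G$ is finitely presented,'' is unavailable, and your second is ill-defined: $Q$ need not have finitely many relators, and loops running inside a coset $gQ$ bound disks only if $Q$ is finitely presented, not merely finitely generated. Before any semistability argument can begin you must fix a notion of semistability at infinity that makes sense for finitely generated groups, specify the locally finite object in which the proper homotopies live, and prove invariance; Theorem \ref{Qends} does not do this, since it concerns the ends of $\Lambda(S,Q,G)$ rather than any end invariant of $G$. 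Second, the one-endedness argument has a real gap: a coset $gQ$ may meet the forbidden ball $B(1,R')$, and since cosets can be badly distorted in $\Gamma$ (e.g.\ $\langle x\rangle$ in $BS(1,2)$), a point of $gQ$ that is far from $1$ in $\Gamma$ could a priori be separated from infinity, inside the fibre's intrinsic geometry, by the finite trace $gQ\cap B(1,R')$; likewise, lifting an edge path of $\Lambda$ requires interspersed vertical moves to reach the transfer points, so ``horizontal'' travel raises the same properness problem rather than avoiding it. These particular defects are repairable --- local finiteness of $\Lambda$ (Theorem \ref{locfinequiv}) shows only finitely many cosets meet $B(1,R')$, and the bounded components of their punctured fibres lie in a fixed ball, which lets you choose $R$ accordingly --- and one-endedness can also be obtained more cleanly from Stallings' theorem, since a minimal action of $G$ on a tree with finite edge stabilizers is incompatible with an infinite, finitely generated, quasi-normal $Q$ of infinite index (an elliptic $Q$ forces the infinite group $gQg^{-1}\cap Q$ to fix the geodesic from $v$ to $gv$ for every $g$, while a $Q$ containing a hyperbolic element has a $G$-invariant minimal subtree, forcing $[G:Q]<\infty$). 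But as submitted, the proposal neither supplies these repairs nor the central properness argument.
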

The corresponding result for normal subgroups is the main result of Mihalik's paper \cite{M}.
As a straightforward corollary to theorem \ref{SS}, we obtain Vee Ming Lew's theorem on semi-stability of groups with infinite finitely generated subnormal subgroups of infinite index. As a direct corollary to theorem \ref{SS} and lemma \ref{F7}, we obtain:

\begin{theorem} \label{HNN} 
Suppose $H$ is a finitely generated group, $\phi:H\to H$ is a monomorphism and $\phi(H)$ has finite index in $H$. If $G$ is the resulting HNN extension:
$$G\equiv \langle t, H: t^{-1}ht=\phi(h) \hbox{ for all }h\in H\rangle$$
then $G$ is semistable at infinity.
\end{theorem}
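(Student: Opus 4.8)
The plan is to exhibit $G$ as a finitely generated group in which the base group $H$ sits as an infinite, finitely generated, quasi-normal subgroup of infinite index, and then quote Theorem \ref{SS} directly. First I would record that $G=\langle t,H:t^{-1}ht=\phi(h)\rangle$ is finitely generated, since it is generated by $t$ together with any finite generating set for $H$. The quasi-normality of $H$ in $G$ is then immediate: the hypotheses here (a monomorphism $\phi$ with $\phi(H)$ of finite index in $H$) are precisely those of Lemma \ref{F7}, so $H$ is quasi-normal in the ascending HNN extension $G=H\ast_\phi$.

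Next I would check the remaining numerical hypotheses of Theorem \ref{SS} for the subgroup $Q=H$. Finite generation of $Q=H$ is given. For infinite index, I would use the retraction $G\to\mathbb{Z}$ sending $t\mapsto 1$ and killing $H$; this carries $t^n$ to $n$ and each element of $H$ to $0$, so $t^n\in H$ forces $n=0$. Hence the cosets $\{t^nH\}_{n\in\mathbb{Z}}$ are pairwise distinct and $[G:H]=\infty$. With these facts in hand, whenever $H$ is infinite all four hypotheses of Theorem \ref{SS} are satisfied ($H$ is infinite, finitely generated, quasi-normal, of infinite index in the finitely generated group $G$), and Theorem \ref{SS} yields that $G$ is one-ended and in particular semistable at infinity.

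The one genuine gap is that Theorem \ref{SS} requires $Q$ to be infinite, a condition that fails exactly when $H$ is finite, so I would handle that degenerate case by hand. If $H$ is finite, then an injective endomorphism $\phi$ of $H$ is automatically surjective, so $\phi$ is an automorphism and $G=H\ast_\phi$ is the mapping torus of an automorphism; that is, $G$ fits in a short exact sequence $1\to H\to G\to\mathbb{Z}\to 1$ with $H$ finite. Thus $G$ is virtually infinite cyclic, hence two-ended, and two-ended groups are semistable at infinity. Combining the two cases gives the conclusion uniformly.

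I expect the only real subtlety to be this split into the finite-$H$ and infinite-$H$ cases, since the infinite case is a direct application of the two cited results and the finite case must be recognized as landing in the two-ended (still semistable) regime. The points to keep honest are the infinite-index computation via the $\mathbb{Z}$-retraction and the observation that the finite-$H$ branch remains semistable, so that neither branch escapes the desired conclusion.
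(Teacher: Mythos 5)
Your proof is correct and follows exactly the route the paper intends: the paper offers no separate argument for this theorem, stating it simply as a direct corollary of Theorem \ref{SS} and Lemma \ref{F7}, which is precisely your main case. Your extra care---verifying infinite index via the retraction to $\mathbb{Z}$ and handling the degenerate case of finite $H$ (where Theorem \ref{SS} does not apply, but $G$ is virtually infinite cyclic, hence two-ended and semistable)---supplies details the paper glosses over.
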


\begin{theorem}\label{SC}
Suppose $G$ is a finitely presented group, and $Q$ is a subgroup of $G$ that is infinite, one ended, finitely presented, quasi-normal in $G$, and of infinite index in $G$, then $G$ is simply connected at infinity.
\end{theorem}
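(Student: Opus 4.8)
The plan is to reduce the statement to the triviality of the fundamental pro-group at infinity, and then to run an adaptation of the classical normal-subgroup simple-connectivity-at-infinity argument, with the locally finite coset graph $\Lambda(S,Q,G)$ playing the role that the Cayley graph of a quotient group plays in the normal case. First I would fix a finite presentation of $G$ whose generating set $S$ contains a finite generating set of $Q$ and whose relators include a finite set of defining relators of $Q$ (possible since both $G$ and $Q$ are finitely presented). Let $X$ be the associated presentation $2$-complex and $\tilde X$ its universal cover: a simply connected complex on which $G$ acts freely and cocompactly, with $1$-skeleton the Cayley graph $\Gamma=\Gamma(S,G)$. Since $Q$ is infinite, finitely generated (being finitely presented), quasi-normal, and of infinite index, Theorem \ref{SS} already gives that $G$ is one-ended and semistable at infinity. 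Hence $\tilde X$ is one-ended and its pro-$\pi_1$ at infinity is independent of the base ray, so it remains only to prove that this pro-group is pro-trivial, i.e. that for every compact $C\subset\tilde X$ there is a larger compact $D$ such that every loop in $\tilde X\setminus D$ is null-homotopic in $\tilde X\setminus C$.

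The geometric organization I would exploit is the $G$-equivariant projection $p:\Gamma\to\Lambda(S,Q,G)$ of Corollary \ref{locfin2}. Because $Q$ is quasi-normal, $\Lambda$ is locally finite (Theorem \ref{locfinequiv}), $G$ acts on it transitively and by isometries (Corollary \ref{locfin2}), and it has $0$, $1$, $2$ or uncountably many ends (Theorem \ref{Qends}); since $Q$ is infinite of infinite index, $\Lambda$ is infinite. Each fiber $p^{-1}(gQ)=gQ$ carries, via the chosen relators of $Q$, the structure of a copy of the one-ended, simply connected, finitely presented complex $\tilde X_Q$ on which $Q$ acts cocompactly. Thus $\tilde X$ is assembled from one-ended fibers over the locally finite base $\Lambda$, and the idea is to fill a far-out loop by combining a fiber-direction filling with a base-direction slide.

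Concretely, given a loop $\ell$ outside a large ball, I would first use finite presentability of $Q$ to fill the portions of $\ell$ that run parallel to a single fiber $gQ$ by disks lying in that fiber, with area controlled by the Dehn function of $Q$; one-endedness of $Q$ then supplies the room to push these fiber disks toward the single end of the fiber, hence off any prescribed compact set. The remaining, ``horizontal'' part of $\ell$ projects to a path in $\Lambda$, and here one-endedness of $G$ together with semistability and the combinatorics of $\Lambda$ is used to contract the projected behavior near infinity and to lift the contraction back to $\tilde X$. Iterating this, while tracking basepoints through the semistable structure, should yield a null-homotopy of $\ell$ avoiding $C$.

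The main obstacle, and the feature that distinguishes this from the classical normal-subgroup theorem, is that $\Lambda(S,Q,G)$ is \emph{not} the Cayley graph of a quotient group: by Corollary \ref{locfin2} the $G$-action is transitive but not free, the vertex stabilizers $gQg^{-1}$ are nontrivial, and there may be several labeled edges between two given vertices. Consequently the normal-subgroup proof cannot be transcribed verbatim, and the entire filling must be carried out using only the transitive isometric action. The delicate points I expect to absorb most of the work are controlling the ``horizontal'' loops that wrap around cycles of $\Lambda$ (which have no analogue in the group-quotient setting), verifying that the one end of each fiber genuinely provides enough room to slide fillings to infinity, and, above all, guaranteeing that the resulting null-homotopies are proper so that they avoid the prescribed compact set $C$. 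It is precisely here that the finite presentability and one-endedness of $Q$ must be used in tandem with the global end structure of $\Lambda$ supplied by Theorem \ref{Qends}.
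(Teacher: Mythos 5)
The first thing to say is that the paper you were asked to match contains \emph{no proof} of Theorem \ref{SC}: it is announced as a result of the separate paper \cite{CM}, and the only methodological information given here is that the coset graph $\Lambda(S,Q,G)$ is fundamental in that paper. So your proposal can only be measured against that announced framework, and at the level of framework you are aligned with it: reduce to pro-triviality of the fundamental pro-group at infinity, invoke Theorem \ref{SS} for one-endedness and semistability, and let the locally finite, vertex-transitive graph $\Lambda(S,Q,G)$ (Corollary \ref{locfin2}, Theorems \ref{Qends} and \ref{locfinequiv}) play the role of the quotient Cayley graph in Jackson's argument \cite{J}. That much is the right skeleton.

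However, there is a genuine gap, and one step is wrong as stated. The pivotal sentence ``one-endedness of $Q$ then supplies the room to push these fiber disks toward the single end of the fiber, hence off any prescribed compact set'' is exactly the assertion that the fiber --- a copy of the Cayley $2$-complex of $Q$ --- is \emph{simply connected at infinity}; that is not a hypothesis. $Q$ is only assumed one-ended, and Jackson's theorem is delicate precisely because $Q$ need not be simply connected at infinity, so fillings inside a fixed fiber cannot in general be pushed off compact sets within that fiber. (Your appeal to the Dehn function of $Q$ is likewise beside the point: what matters for simple connectivity at infinity is where a filling disk lies, not its area.) The maneuver that actually works is different: the projection $p:\Gamma(S,G)\to\Lambda(S,Q,G)$ is $1$-Lipschitz for the edge-path metrics, so every fiber over a vertex of $\Lambda$ at distance greater than $R$ from $p(C)$ lies entirely outside the $R$-neighborhood of $C$; one must therefore homotope the given loop, \emph{avoiding} $C$, into such a far-away fiber --- using that $\Lambda$ is infinite (infinite index), one-endedness of $Q$, and semistability --- and fill it there, where any filling inside the fiber is automatically disjoint from $C$. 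Carrying out that slide, controlling the ``horizontal'' loops that wrap cycles of $\Lambda$, and verifying properness of the total homotopy are exactly the two items you yourself flag as ``absorbing most of the work''; they are the entire content of the theorem, and ``should yield a null-homotopy'' is a plan, not a proof. As it stands, your argument proves the theorem only under the additional (unstated) hypothesis that $Q$ is simply connected at infinity.
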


Theorem \ref{SC} generalizes B. Jackson's corresponding result for normal subgroups in \cite{J}.

\begin{remark} Higman's group 
$$G\equiv \langle a_1,\ldots ,a_4: a_i^{-1}a_{i+1}a_i=a_{i+1}^2 \hbox{ cyclically for all } i\rangle$$ 
is an infinite finitely presented group with no non-trivial subgroups of finite index. A proper normal subgroup $N$ of $G$ is {\it maximal} if it is not contained in any other proper normal subgroup. As the ascending union of normal subgroups is normal, any proper normal subgroup of $G$ is contained in maximal proper normal subgroup of $G$. (To see this,  list the elements of $G$ as $g_1, g_2, \ldots$. If $N$ is a proper normal subgroup of $G$, let $N_0=N$ and $N_i$ be the normal closure of $N_{i-1}\cup \{g_i\}$ if this group is not $G$ and otherwise let $N_i=N_{i-1}$. Now $\{N_i\}_{i=0}^\infty$ is an ascending sequence of normal subgroups in $G$. Hence $M\equiv \cup _{i=0}^\infty N_i$ is normal in $G$. The group $M$ is a proper normal subgroup of $G$ since otherwise, the generators, $a_i$ are elements of $M$ for all $i$, and so for some $j$, $a_i$ is in $N_j$ for all $i$ (this is impossible since $N_j\ne N$). Now $M$ is maximal since if $g\in G$ is not in $M$ then the normal closure of $M\cup \{g\}$ is $G$. As $G$ has no subgroups of finite index, $G/M$ is an infinite finitely generated simple group. 

a) The ascending union of quasi-normal subgroups is not necessarily a quasi-normal subgroup. (See example \ref{E2})

b) Does Higman's group contain interesting quasi-normal subgroups?

c) Is $G/M$ quasi-simple (i.e. does it contain non-trivial quasi-normal subgroups)? 
\end{remark}



\end{document}